\newtheorem{proposition}{Proposition}
\newtheorem{remark}{Remark}
\newtheorem{example}{Example}
\newtheorem{claim}{Claim}
\newcommand{\refi}[1]{Figure \ref{fig:#1}}
\newcommand{\ressec}[1]{Section \ref{ssec:#1}}
\newcommand{\edoc}{
\newcommand{\argmax}{\operatornamewithlimits{argmax}}
\newcommand{\argmin}{\operatornamewithlimits{argmin}}


\usepackage{color}
\definecolor{gray}{rgb}{0.5,0.5,0.5}
\newcommand{\added}[1]{\textcolor{red}{#1}}
\newcommand{\changed}[1]{\textcolor{blue}{#1}}
\newcommand{\removed}[1]{\textcolor{gray}{#1}}

\begin{document}

\title{Synchronization of Weakly Coupled Oscillators: Coupling, Delay and Topology}
\author{Enrique Mallada and Ao Tang\\
    Cornell University, Ithaca, NY 14853
}

\singlespacing
\begin{abstract}
There are three key factors of a system of coupled oscillators that characterize the interaction among them: \emph{coupling (how to affect)}, \emph{delay (when to affect)} and \emph{topology (whom to affect)}. For each of them, the existing work has mainly focused on special cases. With new angles and tools, this paper makes progress in relaxing some assumptions of these factors. There are three main results in this paper. First, by using results from algebraic graph theory, a sufficient condition is obtained which can be used to check equilibrium stability. This condition works for arbitrary topology. It generalizes existing results and also leads to a
sufficient condition on the coupling function with which the system is guaranteed to reach synchronization. Second, it is known that identical oscillators with $\sin()$ coupling functions are guaranteed to synchronize in phase on a complete graph. Using our results, we demonstrate that for many cases certain structures instead of exact shape of the coupling function such as symmetry and concavity are the keys for global synchronization. Finally, the effect of heterogenous delays is investigated. We develop a new framework by constructing a non-delayed phase model that approximates the original one in the continuum limit. We further derive how its stability properties depend on the delay distribution. In particular, we show that heterogeneity, i.e. wider delay distribution, can help reach in-phase synchronization.
\end{abstract}

\maketitle

\section{Introduction}
The system of coupled oscillators has been widely studied in different disciplines ranging from biology \cite{Winfree1967BiologicalRhythms,Peskin75MathematicalAspects,Achermann&Kunz1999ModelingCicardian,Garciaetal2004ModelingMulticelular,YIM03} and chemistry \cite{Kiss&Zhai&Hudson2003synchronizationofchemical,York&Compton1991QuasiopticalPower} to engineering \cite{Hong&Scaglione2005AScalableSynchronization,Geoffreyetal2005FireflyinspiredSensor} and physics \cite{Marvel&Strogatz2009InvariantSubmanifold,Bressloff&Coombes99TravellingWaves}. The possible behavior of such a system can be complex. For example, the intrinsic symmetry of the network can produce multiple limit cycles or equilibria with relatively fixed phases (phase-locked trajectories) \cite{Ashwin&Swift92thedynamics}, which in many cases can be stable \cite{Ermentrout1992StablePeriodic}. Also, the heterogeneity in the natural oscillation frequency can lead to incoherence \cite{Kuramoto75LectureNotes} or even chaos \cite{Popovych&Maistrenko&Tass2005PhaseChaos}.

One particular interesting question is whether the coupled oscillators will synchronize in phase in the long run \cite{Brown&Holmes&Moehlis03globallycoupled, Jadbabaie&Motee&Barahona2004stabilityofkuramoto, Lucarelli&Wang2004decentralizedsynchronization, Monzon&Paganini2005gobalconsidarations,Mirollo&Strogatz90SynchronizationofPulseCoupled}. Besides its clear theoretical value, it also has rich applications in practice.


 In essence, there are three key factors of a system of coupled oscillators that characterize the interaction among oscillators: \emph{coupling}, \emph{delay} and \emph{topology}. For each of them, the existing work has mainly focused on special cases as explained below. In this paper, further research will be discussed on each of these three factors:
\begin{itemize}

\item Topology (\emph{whom to affect}, Section \ref{sec:main}): Current results either restrict to complete graph or a ring topology for analytical tractability~\cite{Monzon&Paganini2005gobalconsidarations}, study {\it local stability} of topology independent solutions over time varying graph~\cite{Moreau2004StabilityofContinuousTimeConsensus, Ren&Beard2004ConsensusofInformation, Stan&Sepulchre2003DissipativityCharacterization}, or introduce dynamic controllers to achieve synchronization for time-varying uniformly connected graphs~\cite{Scardovi2007,Sepulchre2008}. We develop a graph based sufficient condition which can be used to check equilibrium stability for any fixed topology. It also leads to a family of coupling function with which the system is guaranteed to reach {\it global} phase consensus for arbitrary undirected connected graph using only physically meaningful state variables.

\item Coupling (\emph{how to affect}, Section \ref{sec:coupling}): The classical Kuramoto model~\cite{Kuramoto75LectureNotes} assumes a $\sin()$ coupling function. Our study hints that certain symmetry and convexity structures are enough to guarantee global synchronization.

\item Delay (\emph{when to affect}, Section \ref{sec:delay}): {  Existing work generally assumes zero delay among oscillators or require them to be bounded up to a constant fraction of the period~\cite{Papachristodoulou&Jadbabie2006SynchronizationinOscillator}. This is clearly not satisfactory especially if the oscillating frequencies are high. We develop a new framework to study unbounded delays by constructing a non-delayed phase model that is equivalent to the original one. Using this result, we show that wider delay distribution can help reach synchronization.}

\end{itemize}

In this paper we study {\it weakly} coupled oscillators, which can be either pulse-coupled or phase-coupled. Although most of the results are presented for phase-coupled oscillators, they can be readily extended for pulse-coupled oscillators (see, e.g., \cite{Izhikevich98PhaseModels,Izhikevich99WeaklyPulseCoupled}). It is worth noticing that results in Section \ref{sec:topology&coupling} are independent of the strength of the coupling and therefore the weak coupling assumption is not necessary there.
Preliminary versions of this work has been presented in \cite{Mallada&Tang2010Synchronizationof} and \cite{Mallada&Tang10WeaklyPulseCoupled}.

The paper is organized as follows. We describe pulse-coupled and phase-coupled oscillator models, as well as their common weak coupling approximation, in Section \ref{sec:model}. Using some facts from algebraic graph theory and potential dynamics in Section \ref{sec:prelim}, we present the negative cut instability theorem in Section \ref{ssec:local_stability} to check whether an equilibrium is unstable. This then leads to Proposition \ref{prop:main} in Section \ref{ssec:global_stability} which identifies a class of coupling functions with which the system always synchronizes in phase. It is well known that the Kuramoto model produces global synchronization over a complete graph. In Section \ref{sec:coupling}, we demonstrate that a large class of coupling functions, in which the Kuramoto model is a special case, guarantee the instability of most of the limit cycles in a complete graph network. Section \ref{sec:delay} is devoted to the discussion of the effect of delay. An equivalent non-delayed phase model is constructed whose coupling function is the convolution of the original coupling function and the delay distribution. Using this approach, it is shown that sometimes more heterogeneous delays among oscillators can help reach synchronization. We conclude the paper in Section \ref{sec:conclusion}.

\section{Coupled Oscillators}
\label{sec:model}

We consider  two different models of coupled oscillators studied in the literature. The difference between the models arises in the way that the oscillators interact between each other, and their dynamics can be quite different. However, when the interactions are weak (weak coupling), both systems behave similarly and share the same approximation. This allows us to study them under a common framework.

Each oscillator is represented by a phase $\theta_i$ in the unit circle $\mathds S^1$ which in the absence of coupling moves with constant speed 
$
\dot{\theta}_i=\omega.
$
Here, $\mathds S^1$ represents the unit circle, or equivalently the interval $[0,2\pi]$ with $0$ and $2\pi$ identified ($0\equiv 2\pi$), and $\omega=\frac{2\pi}{T}$ denotes the natural frequency of the oscillation.



\subsection{Pulse-coupled Oscillators}

In this model the interaction between oscillators is performed by pulses.  An oscillator $j$ sends out a pulse whenever it crosses zero ($\theta_j=0$). When oscillator $i$ receives a pulse, it will change its position from $\theta_i$ to $\theta_i+\varepsilon\kappa_{ij}(\theta_i)$. The function $\kappa_{ij}$ represents how other oscillators' actions affect oscillator $i$ and the scalar $\varepsilon>0$ is a measure of the coupling strength. These jumps can be modeled by a Dirac's delta function $\delta$ satisfying $\delta(t)=0$ $\forall t\neq 0$, $\delta(0)=+\infty$, and $\int\delta(s)ds=1$. The coupled dynamics is represented by
\begin{equation}\label{eq:pc_model}
\dot\theta_i(t)=\omega + \varepsilon\omega\sum_{j\in\mathcal N_i}\kappa_{ij}(\theta_i(t))\delta(\theta_j(t-\eta_{ij})),
\end{equation}
where $\eta_{ij}>0$ is the propagation delay between oscillators $i$ and $j$ ($\eta_{ij}=\eta_{ji}$), and $\mathcal N_i$ is the set of $i$'s neighbors. The factor of $\omega$ in the sum is needed to keep the size of the jump within $\varepsilon\kappa_{ij}(\theta_i)$. This is because $\theta_j(t)$ behaves like $\omega t$ when crosses zero and therefore the jump produced by $\delta(\theta_j(t))$ is of size $\int\delta(\theta_j(t))dt=\omega^{-1}$ \cite{Izhikevich99WeaklyPulseCoupled}.

The coupling function $\kappa_{ij}$ can be classified based on the qualitative effect it produces in the absence of delay. After one period, if the net effect of the mutual jumps brings a pair of oscillators closer,  we call it {\bf attractive} coupling. If the oscillators are brought further apart, it is considered to be {\bf repulsive} coupling. The former can be achieved for instance if $\kappa_{ij}(\theta)\leq0$ for $\theta\in[0,\pi)$ and $\kappa_{ij}(\theta)\geq0$ for $\theta\in[\pi,2\pi)$. See Figure \ref{fig:attractive_coupling} for an illustration of an attractive coupling $\kappa_{ij}$ and its effect on the relative phases.

%

\begin{figure}[htp]
\centering
\includegraphics[width=.8\columnwidth]{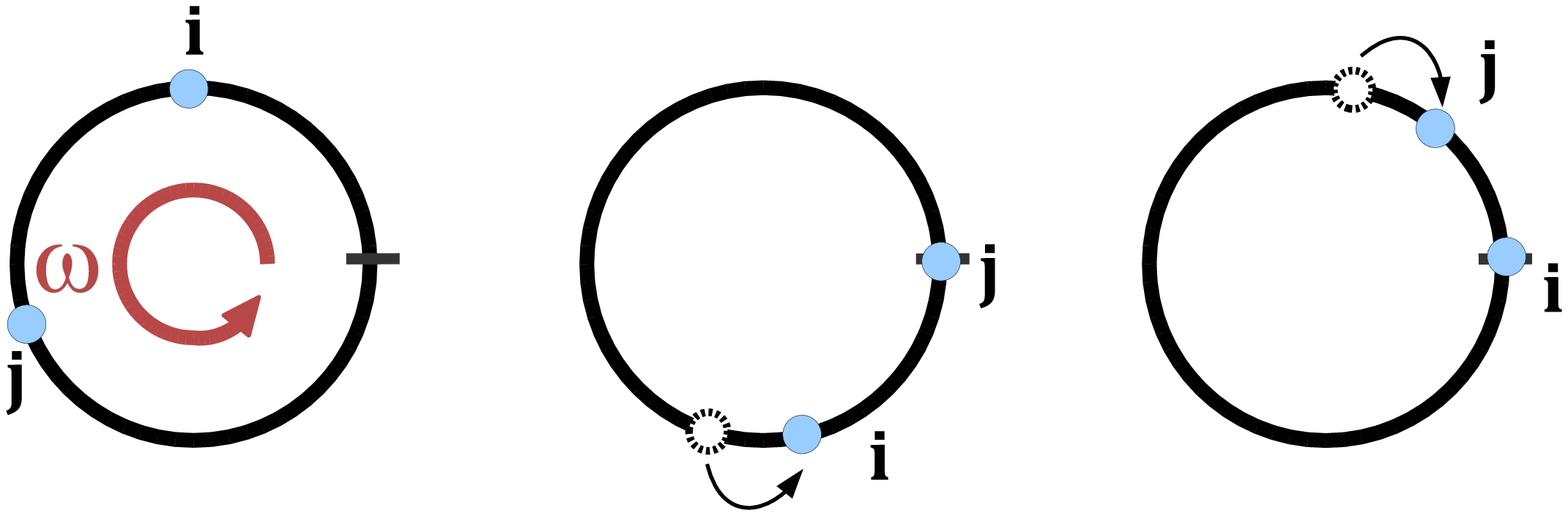}
\includegraphics[width=.78\columnwidth]{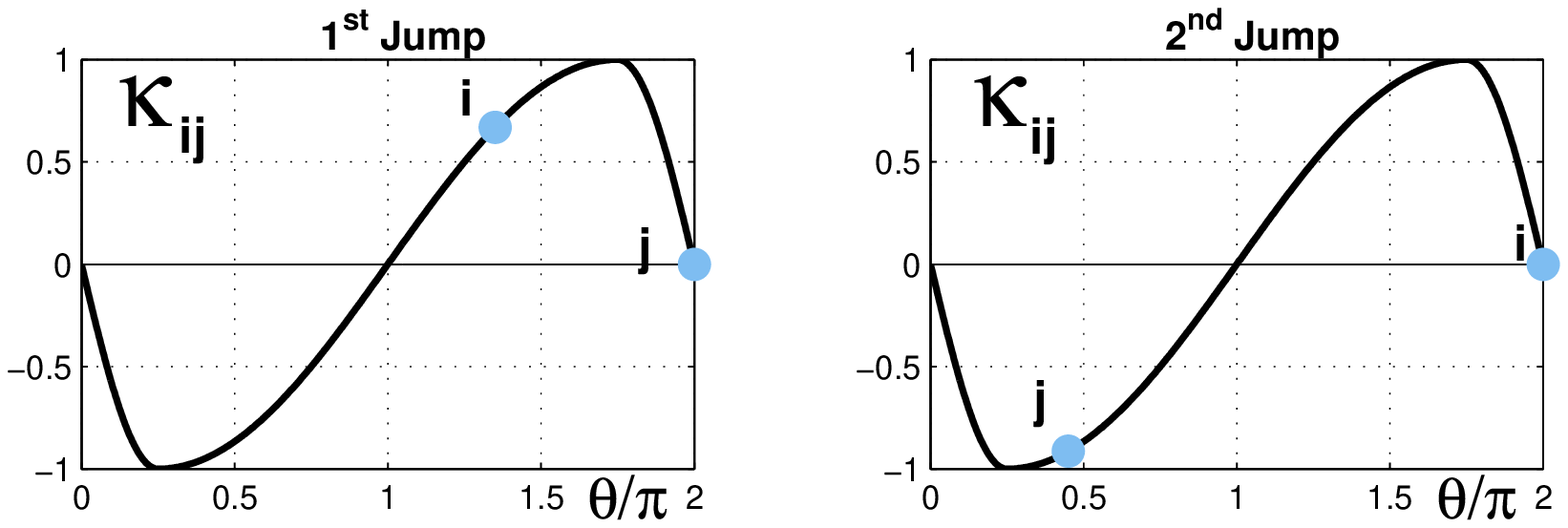}
\caption{{Pulse-coupled oscillators with attractive coupling.}}\label{fig:attractive_coupling}
\end{figure}


This pulse-like interaction between oscillators was first introduced by Peskin  \cite{Peskin75MathematicalAspects} in 1975 as a model of the pacemaker cells of the heart, although the canonic form did not appear in the literature until 1999 \cite{Izhikevich99WeaklyPulseCoupled}. In general, when the number of oscillators is large, there are several different limit cycles besides the in-phase synchronization and many of them can be stable  \cite{Ermentrout1992StablePeriodic}. 

The question of whether this system can collectively achieve in-phase synchronization  was answered for the complete graph case and zero delay by Mirollo and Strogatz in 1990 \cite{Mirollo&Strogatz90SynchronizationofPulseCoupled}. They showed if $\kappa_{ij}(\theta)$ is strictly increasing on $(0,2\pi)$ with a discontinuity in $0$ (which resembles attractive coupling), then for almost every initial condition, the system can synchronize in phase in the long run.



The two main assumptions of \cite{Mirollo&Strogatz90SynchronizationofPulseCoupled} are all to all comunication and zero delay. Whether in-phase synchronization can be achieved for arbitrary graphs has been an open problem for more than twenty years. On the other hand, when delay among oscillators is introduced the analysis becomes intractable. Even for the case of two oscillators, the number of possibilities to be considered is large \cite{Ernst&Pawelzik95synchronizationinduced,Ernst&Pawelzik98delayinduced}.

\subsection{Phase-coupled Oscillators}

In the model of phase-coupled oscillators, the interaction between neighboring oscillators $i$ and $j\in \mathcal N_i$ is modeled by change of the oscillating speeds. Although in general the speed change can be a function of both phases $(\theta_i,\theta_j)$, we concentrate on the case where the speed change is a function of the phase differences $f_{ij}(\phi_j(t-\eta_{ij}) - \phi_i(t))$. Thus, since the net speed change of oscillator $i$ amounts to the sum of the effects of its neighbors, the full dynamics is described by
\begin{equation}\label{eq:delayed_phase_model}
    \dot{\phi}_i(t)= \omega + \varepsilon\sum_{j\in \mathcal N_i}f_{ij}(\phi_j(t-\eta_{ij}) - \phi_i(t)).
\end{equation}
The function $f_{ij}$ is usually called coupling function, and as before $\eta_{ij}$ represents delay and $\mathcal N_i$ is the set of neighbors of $i$.

\begin{figure}[htp]
\centering
\includegraphics[width=.7\columnwidth]{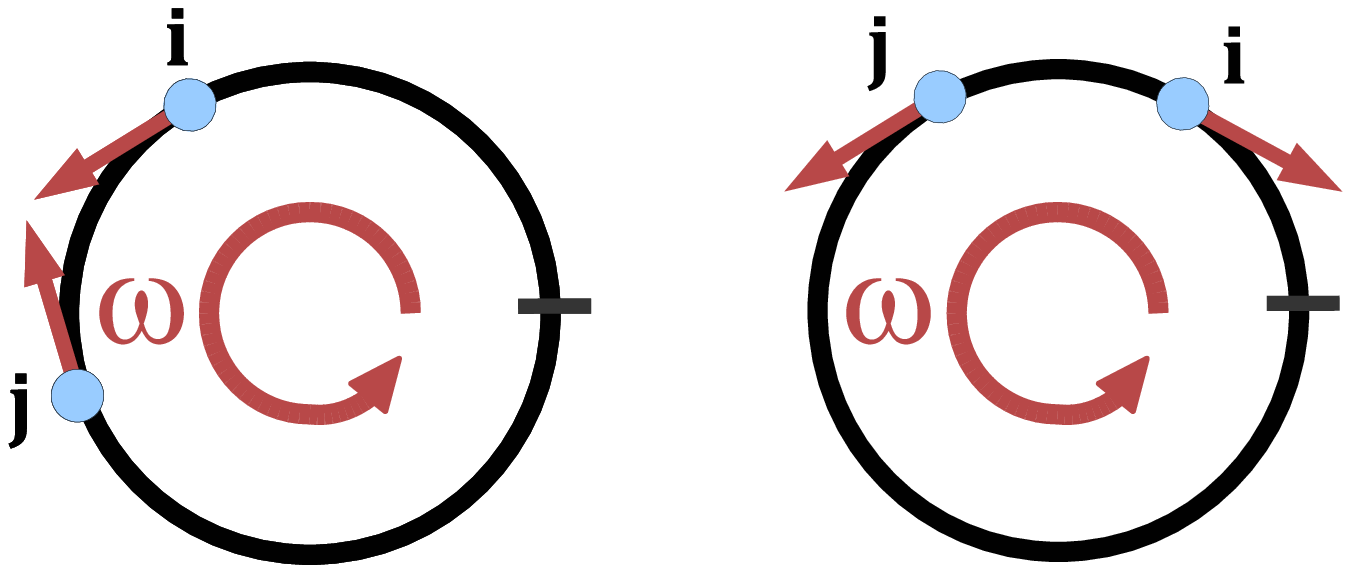}
\includegraphics[width=.78\columnwidth]{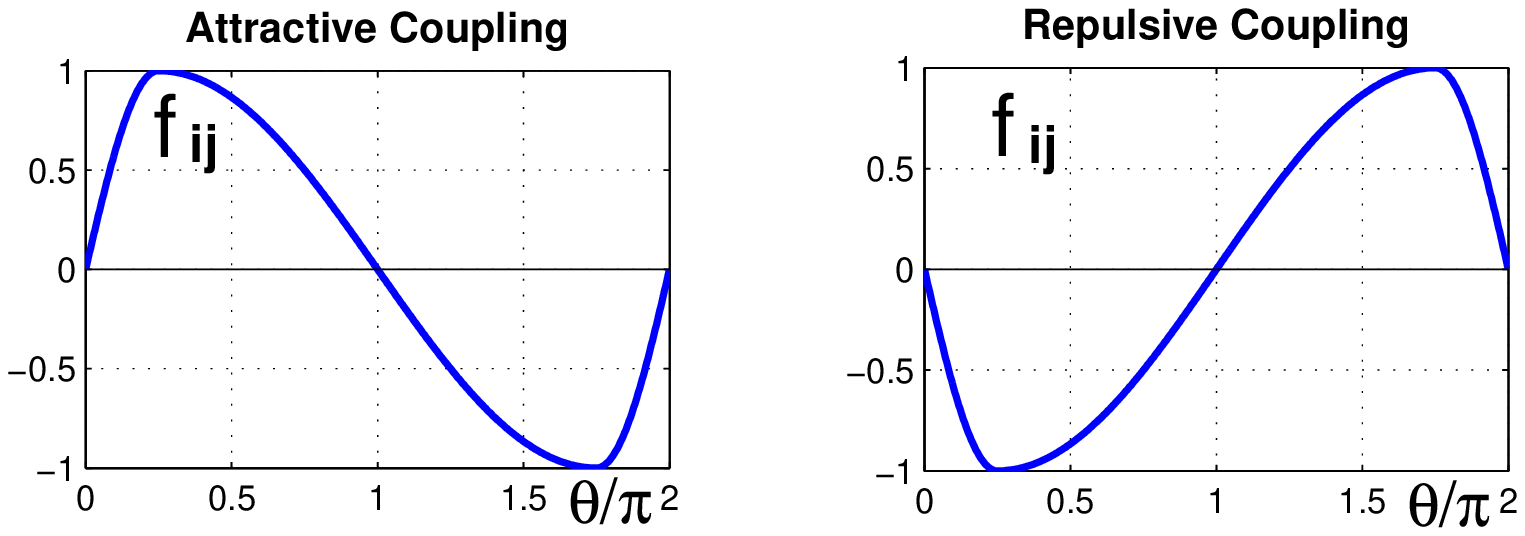}
\caption{{Phase-coupled oscillators with attractive and repulsive coupling. } } \label{fig:H_attractive_repulsive}
\end{figure}

A similar definition for attractive and repulsive couplings can be done in this model. We say that the coupling function $f_{ij}$ is {\bf attractive} if, without delays, the change in speeds brings oscillators closer, and {\bf repulsive} if they are brought apart. Figure \ref{fig:H_attractive_repulsive} shows typical attractive and repulsive coupling functions where arrows represent the speed change produced by the other oscillator; if the pointing direction is counter clockwise, the oscillator speeds up, and otherwise it slows down.

When $f_{ij}=\frac{K}{N}\sin()$, $K>0$ (attractive coupling), this model is known as the classical Kuramoto model \cite{Kuramoto84chemicaloscillations}. Intensive research work has been conducted on this model, however convergence results are usually limited to cases with all to all coupling ($\mathcal N_i=\mathcal N\backslash \{i\}$, i.e., complete graph topology) and no delay ($\eta_{ij}=0$), see e.g. \cite{Monzon&Paganini2005gobalconsidarations,Sepulchre2007},  or to some regions of the state space \cite{Papachristodoulou&Jadbabie2006SynchronizationinOscillator}.

\subsection{Weak Coupling Approximation}

We now concentrate in the regime in which the coupling strength of both models is weak, i.e. $1\gg\varepsilon>0$. For pulse-coupled oscillators, this implies that the effect of the jumps originated by each neighbor can be approximated by their average \cite{Izhikevich98PhaseModels}. For phase-coupled oscillators, it implies that to the first order $\phi_i(t-\eta_{ij})$ is well approximated by $\phi_i(t) - \omega\eta_{ij}$.

The effect of these approximations allows us to completely capture the behavior of both systems using the following equation where we assume that every oscillator has the same natural frequency $\omega$ and only keep track of the relative difference using
\begin{equation} \label{eq:phase_dynamics}
\dot{\phi}_i= \varepsilon\sum_{j\in \mathcal N_i}f_{ij}(\phi_j - \phi_i - \psi_{ij} ).
\end{equation}
For pulse-coupled oscillators, the coupling function is given by
\begin{equation}\label{eq:f_kappa}
f_{ij}(\theta)=\frac{\omega}{2\pi}\kappa_{ij}(-\theta),
\end{equation}
and the phase lag $\psi_{ij}=\omega \eta_{ij}$ represents the distance that oscillator $i$'s phase can travel along the unit circle during the delay time $\eta_{ij}$. Equation \eqref{eq:f_kappa} also shows that the attractive/repulsive coupling classification of both models are in fact equivalent, since in order to produce the same effect $\kappa_{ij}$ and $f_{ij}$ should be mirrored, as illustrated in Figure \ref{fig:attractive_coupling} and Figure \ref{fig:H_attractive_repulsive}.

Equation \eqref{eq:phase_dynamics} captures the relative change of the phases and therefore any solution to \eqref{eq:phase_dynamics} can be immediately translated to either \eqref{eq:pc_model} or \eqref{eq:delayed_phase_model} by adding $\omega t$. For example, if $\phi^*$ is an equilibrium of \eqref{eq:phase_dynamics}, by adding $\omega t$, we obtain a limit cycle in the previous models. { Besides the delay interpretation for $\psi_{ij}$, \eqref{eq:phase_dynamics} is also known as a system of coupled oscillators with {\it frustration}, see e.g. \cite{Daido1992QuasientrainmentandSlowRelaxation}}.

From now on we will concentrate on \eqref{eq:phase_dynamics} with the understanding that any convergence result derived will be immediately  true for the original models in the weak coupling limit. We are interested in the attracting properties of phase-locked invariant orbits within $\mathcal T^N$, which can be represented by
$
\phi(t)= \omega^*t\ones_N + \phi^*
$,
where  $\ones_N=(1,\dots,1)^T\in\mathcal T^N$, and $\phi^*$ and $\omega^*$ are solutions to
\begin{equation} \label{eq:fij_phase_locked_eqn}
\omega^*=\varepsilon\sum_{j\in \mathcal N_i} f_{ij}(\phi_j^* - \phi_i^* - \psi_{ij})
\mbox{,  $\forall i$.}
\end{equation}

Whenever the system reaches one of these orbits, we say that it is synchronized or phase-locked. If furthermore, all the elements of $\phi^*$ are equal, we say the system is synchronized {\bf in-phase} or  that it is {\bf in-phase} locked. It is easy to check that for a given equilibrium $\phi^*$ of \eqref{eq:phase_dynamics}, any solution of the form $\phi^*+\lambda\ones_N$, with $\lambda\in\mathds R$, is also an equilibrium that identifies the same limit cycle. Therefore, two equilibria $\phi^{1,*}$ and $\phi^{2,*}$ will be considered to be equivalent, if both identifies the same orbit, or equivalently, if both belongs to the same connected set of equilibria  
\begin{equation}
E_{\phi^*}:=\{\phi\in\mathcal T^N| \phi=\phi^*+\lambda\ones_N, \lambda\in \mathds R\}. \label{eq:E_phi}
\end{equation}

\section{Effect of Topology and Coupling}\label{sec:topology&coupling}

In this section we concentrate on the class of coupling function $f_{ij}$ that are symmetric ($f_{ij}=f_{ji}$ $\forall ij$), odd ( $f_{ij}(-\theta)=-f_{ij}(\theta)$) and continuously differentiable.
We also assume that there is no delay within the network, i.e. $\psi_{ij}=0$ $\forall ij$. Thus, \eqref{eq:phase_dynamics} reduces to
\begin{equation}
\label{eq:phase_dynamics_no_delay}
\dot{\phi}_i = \varepsilon\sum_{j\in \mathcal N_i}f_{ij}(\phi_j - \phi_i).
\end{equation}

In the rest of this section we progressively show how with some extra conditions on $f_{ij}$ we can guarantee in-phase synchronization for arbitrary undirected graphs. Since we know that the network can have many other phase-locked trajectories besides the in-phase one, our target is an {\bf almost global stability} result \cite{Rantzer2001dualtoLypunov}, meaning that the set of initial conditions that does not eventually lock in-phase has zero measure. Latter we show how most of the phase-locked solution that appear on a complete graph are unstable under some general conditions on the structure of the coupling function.

\subsection{Preliminaries}
\label{sec:prelim}
We now introduce some prerequisites used in our later analysis.
\subsubsection{ Algebraic Graph Theory } \label{ssec:graph_theory}
We start by reviewing basic definitions and properties from graph theory \cite{Bollobas98moderngraph,Godsil&Royle2001algebraicgraph} that are used in the paper.
Let $G$ be the connectivity graph that describes the coupling configuration. We $V(G)$ and $E(G)$ to denote the set of vertices ($i$ or $j$) and undirected edges ($e$) of $G$.   
An undirected graph $G$ can be directed by giving a specific orientation $\sigma$ to the elements in the set $E(G)$. That is, for any given edge $e\in E(G)$, we designate one of the vertices to be the {\it head} and the other to be the {\it tail} giving $G^\sigma$.

Although in the definitions that follow we need to give the graph $G$ a given orientation $\sigma$, the underlying connectivity graph of the system is assumed to be {\bf undirected}. This is not a problem as the properties used in this paper are independent of a particular orientation $\sigma$ and therefore are properties of the undirected graph $G$. Thus, to simplify notation we drop the superscript $\sigma$ from $G^\sigma$ with the understanding that $G$ is now an induced directed graph with some fixed, but arbitrarily chosen, orientation.

We use $P=(V^-,V^+)$ to denote a partition of the vertex set $V(G)$ such that $V(G)=V^- \cup V^+$ and $V^- \cap V^+=\emptyset$. The cut $C(P)$ associated with $P$, or equivalently $C(V^-,V^+)$, is defined as $C(P):=\{ij\in E(G)| i\in V^-, j\in V^+\text{, or vice versa.}\}$. Each partition can be associated with a vector column $c_P$ where $c_P(e)= 1$ if $e$ goes form $V^-$ to $V^+$, $c_P(e)=-1$ if $e$ goes form $V^+$ to $V^-$ and $c_P(e)= 0$ if $e$ stays within either set. 

There are several matrices associated with the oriented graph $G$ that embed information about its topology. However, the one with most significance to this work is the {\it oriented incidence matrix} $B \in \mathds R^{\norm{V(G)}\times \norm{E(G)}}$ where $B(i,e) =1$ if $i$ is the head of $e$, $B(i,e) =-1$ if $i$ is the tail of $e$ and $B(i,e) =0$ otherwise. 

\subsubsection{ Potential Dynamics }\label{ssec:potential}
We now describe how our assumptions on $f_{ij}$ not only simplifies considerably the dynamics, but also allows us to use the graph theory properties introduced in \ressec{graph_theory} to gain a deeper understanding of \eqref{eq:phase_dynamics}.

While $f_{ij}$ being continuously differentiable is standard in order to study local stability and sufficient to apply LaSalle's invariance principle \cite{Khalil96nonlinearsystems}, the symmetry and odd assumptions have a stronger effect on the dynamics. 

For example, under these assumptions the system \eqref{eq:phase_dynamics_no_delay} can be compactly
rewritten in a vector form as
\begin{equation} \label{eq:F_dynamics}
\dot{\phi}=-\varepsilon BF(B^T\phi)
\end{equation}
where $B$ is the adjacency matrix defined in \ressec{graph_theory} and
the map $F:\mathcal E(G) \rightarrow \mathcal E(G)$ is
\[
    F(y)=(f_{ij}(y_{ij}))_{{ij}\in E(G)}.
\]

This new representation has several properties. First, from the properties of $B$ one can easily show that \eqref{eq:fij_phase_locked_eqn} can only hold with $\omega^*=0$ for arbitrary graphs \cite{Brown&Holmes&Moehlis03globallycoupled} (since $N\omega^*=\omega^*\ones_N^T\ones_N=-\varepsilon\ones_N^TBF(B^T\phi)=0$), which implies that every phase-locked solution is an equilibrium of \eqref{eq:phase_dynamics_no_delay} and that every limit cycle of the original system  \eqref{eq:phase_dynamics} can be represented by some $E_\phi^*$ on \eqref{eq:phase_dynamics_no_delay}.

However, the most interesting consequence of \eqref{eq:F_dynamics} comes from interpreting  $F(y)$  as the gradient of a potential function
\[
    W(y)=\sum_{ij\in E(G)}\int_0^{y_{ij}}f_{ij}(s)ds.
\]
Then, by defining $V(\phi)= (W\circ B^T)(\phi)=W(B^T\phi)$, \eqref{eq:F_dynamics} becomes a gradient descent law for $V(\phi)$, i.e.,
\begin{align*}
\dot{\phi}=-\varepsilon BF(B^T\phi) =-\varepsilon B\nabla W(B^T\phi)= -\varepsilon \nabla V(\phi),
\end{align*}
where in the last step above we used the property $\nabla(W\circ B^T)(\phi)=B\nabla W(B^T\phi)$.
This makes $V(\phi)$ a natural Lyapunov function candidate since
\begin{equation}\label{eq:Vdot}
    \dot V(\phi) =\ip{\nabla V(\phi)}{\dot\phi}  =-\varepsilon \norm{\nabla V(\phi)}^2=-\frac{1}{\varepsilon}\norm{\dot\phi}^2 \leq 0.
\end{equation}

Furthermore, since the trajectories of \eqref{eq:F_dynamics} are constrained into the $N$-dimensional torus $\mathcal T^N$, which is compact, $V(\phi)$ satisfies the hipotesis of LaSalle's invariance principle (Theorem 4.4 \cite{Khalil96nonlinearsystems}), i.e. there is a compact positively invariant set, $\mathcal T^N$ and a function $V:\mathcal T^N\rightarrow \mathds R$ that decreases along the trajectories $\phi(t)$. Therefore, for every initial condition, the trajectory converges to the largest invariant set $M$ within $\{\dot V \equiv 0 \}$ which is the equilibria set $E=\{\phi\in\mathcal T^N| \dot\phi\equiv0\}=\bigcup_{\phi^*}E_{\phi^*}$.



\begin{remark}
The fact that symmetric and odd coupling induces potential dynamics is well know in the physics community~\cite{Hoppensteadt&Izhikevich97weaklyconnected}. However, it has been also rediscovered in the control community~\cite{Jadbabaie&Motee&Barahona2004stabilityofkuramoto} for the specific case of sine coupling. Clearly, this is not enough to show almost global stability, since it is possible to have other stable phase-locked equilibrium sets besides the in-phase one. However, if we are able show that all the non-in-phase equilibria are unstable, then almost global stability follows. That is the focus of the next section.
\end{remark}

\subsection{Negative Cut Instability Condition}
\label{sec:main}

We now present the main results of this section. Our technique can be viewed as a generalization of \cite{Monzon&Paganini2005gobalconsidarations}. 
By means of algebraic graph theory, we provide a better stability analysis of the equilibria under a more general framework. We further use the new stability results to characterize $f_{ij}$ that guarantees almost global stability.

\subsubsection{Local Stability Analysis} \label{ssec:local_stability}

In this section we develop the graph theory based tools to characterize the stability of each equilibrium. We will show that given an equilibrium $\phi^*$ of the system \eqref{eq:F_dynamics}, with connectivity graph $G$ and $f_{ij}$ as described in this section. If there exists a cut $C(P)$ such that the sum
\begin{equation} \label{eq:cut_instability_cond}
    \sum_{ij\in C(P)} f'_{ij}(\phi_j^* - \phi_i^*) < 0,
\end{equation}
the equilibrium $\phi^*$ is {\bf unstable}.

Consider first an equilibrium point $\phi^*$. Then, the first order approximation of \eqref{eq:F_dynamics} around $\phi^*$ is
\begin{align*}
\delta\dot\phi
&=-\varepsilon B\left[\frac{\partial }{\partial y}F(B^T\phi^*)\right]B^T\delta\phi,
\end{align*}
were $\delta\phi=\phi - \phi^*$ is the incremental phase variable, and $\frac{\partial }{\partial y}F(B^T\phi^*) \in \mathds R^{\norm{ E(G)}\times \norm{ E(G)}}$is the Jacobian of $F(y)$ evaluated at $B^T\phi^*$, i.e., 
$
\frac{\partial }{\partial y}F(B^T\phi^*)=
\diag\left({\{f'_{ij}(\phi^*_j-\phi^*_i)\}_{{ij}\in E(G)}}\right).
$

Now let $A=-\varepsilon B\left[\frac{\partial }{\partial y}F(B^T\phi^*)
\right]B^T$ and consider the linear system
$
\delta\dot\phi=A\delta\phi.
$
Although it is possible to numerically calculate the eigenvalues of $A$ given $\phi^*$ to study the stability, here we use the special structure of $A$
to provide a sufficient condition for instability that has nice graph theoretical interpretations.

Since $A$ is symmetric, it is straight forward to check that $A$ has at least one positive eigenvalue, i.e. $\phi^*$ is unstable, if and only if $x^TAx>0$. 
Now, given any partition $P=(V^-,V^+)$, consider the associated vector $c_P$, define $x_P$ such that $x_i=\frac{1}{2}$ if $i\in V^+$ and $x_i=-\frac{1}{2}$ if $i\in V^-$. Then it follow from the definition of $B$ that $c_P=B^Tx_P$ which implies that
\begin{align*}
\frac{-1}{\varepsilon}x_P^TAx_P = c_P^T\left[\frac{\partial }{\partial y}F(B^T\phi^*)\right]c_P =\sum_{ij\in C(P)} f'_{ij}(\phi_j^*-\phi_i^*).
\end{align*}

Therefore, when condition \eqref{eq:cut_instability_cond} holds,  $A=-\varepsilon BDB^T$ has at least one eigenvalue whose real part is {\bf positive}.

\begin{remark}
Equation \eqref{eq:cut_instability_cond} provides a {\bf sufficient} condition for instability; it is not clear what happens when \eqref{eq:cut_instability_cond} does not hold. However, it gives a graph-theoretical interpretation  that can be used to provide stability results for general topologies. That is, if the {\bf minimum cut cost } is negative, the equilibrium is unstable.
\end{remark}

\begin{remark}\label{rem:generalization}
Since the weights of the graph $f'_{ij}(\phi_j^*-\phi_i^*)$ are functions of the phase difference, \eqref{eq:cut_instability_cond} holds for any equilibria of the form $\phi^*+\lambda \ones_N$. Thus, the result holds for the whole set $E_{\phi^*}$ defined in \eqref{eq:E_phi}.
\end{remark}

When \eqref{eq:cut_instability_cond} is specialized to $P=(\{i\},V(G)\backslash \{i\})$ and $f_{ij}(\theta)=\sin(\theta)$,  it reduces to the instability condition in Lemma 2.3 of \cite{Monzon&Paganini2005gobalconsidarations}; i.e.,
\begin{equation}\label{eq:neighbors_instability_condition}
    \sum_{j\in\mathcal N_i}cos(\phi^*_j- \phi^*_i)<0.
\end{equation}
However, \eqref{eq:cut_instability_cond} has a broader applicability spectrum as the following example shows.

\begin{example}

Consider a six oscillators network as in \refi{network_example}, where each node is linked with its four closest neighbors and $f_{ij}(\theta)=\sin(\theta)$. Then, by symmetry, it is easy to verify that
\begin{equation}\label{eq:equilibrium}
    \phi^*=\left[0, \frac{\pi}{3},\frac{2\pi}{3},\pi,\frac{4\pi}{3},
    \frac{5\pi}{3}\right]^T
\end{equation}
is an equilibrium of \eqref{eq:phase_dynamics_no_delay}.

\begin{figure}[htp]
\centering
\includegraphics[width=.4\columnwidth]{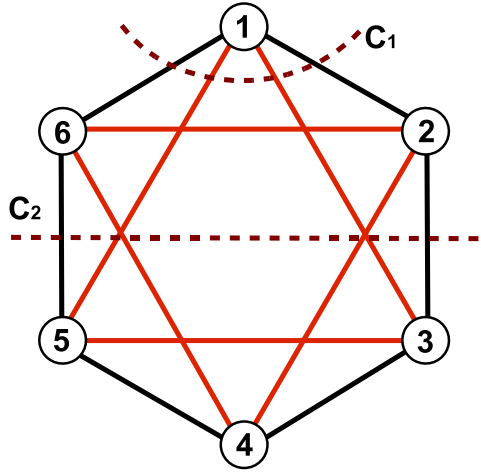}
\caption{The network of six oscillators (Example 4)}\label{fig:network_example}
\end{figure}

\begin{figure}[htp]
\centering
\includegraphics[width=.95\columnwidth,height=.4\columnwidth]{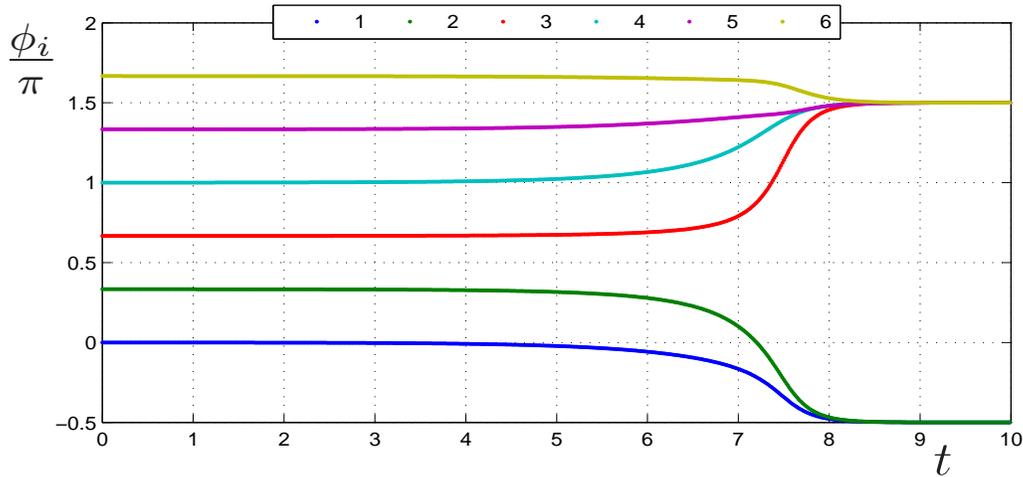}
\caption{Unstable equilibrium $\phi^*$. Initial condition $\phi_0=\phi^* +\delta\phi$}\label{fig:phases_example}
\end{figure}

We first study the stability of $\phi^*$ using \eqref{eq:neighbors_instability_condition}  as in \cite{Monzon&Paganini2005gobalconsidarations}. By substituting \eqref{eq:equilibrium} in $\cos(\phi^*_j-\phi^*_i)$ $\forall ij\in E(G)$ we find that the edge weights can only take two values:
\begin{equation*}
\cos(\phi^*_j-\phi^*_i)=
\begin{cases}
\cos(\frac{\pi}{3}) =\frac{1}{2},  & \text{ if $j = i \pm 1 \mod 6$ } \\
\cos(\frac{2\pi}{3}) =-\frac{1}{2}, & \text{ if $j = i \pm 2 \mod 6$ }
\end{cases}
\end{equation*}
Then, since any cut that isolates one node from the rest (like $C_1=C(\{1\},V(G)\backslash\{1\})$ in  \refi{network_example}) will always have two edges of each type, their sum is {\bf zero}. Therefore, \eqref{eq:neighbors_instability_condition} cannot be used to determine stability.

If we now use condition \eqref{eq:cut_instability_cond} instead, we are allowed to explore a wider variety of cuts that can potentially have smaller costs. In fact, if instead of $C_1$ we sum over $C_2=C(\{1,2,6\},\{3,4,5\})$, we obtain,
\[
\sum_{ij\in C_2}\cos(\phi^*_j- \phi^*_i)= -1<0,
\]
which implies that $\phi^*$ is unstable.

Figure \ref{fig:phases_example} verifies the equilibrium instability. By starting with an initial condition $\phi_0=\phi^* +\delta\phi$ close to the equilibrium $\phi^*$, we can see how the system slowly starts to move away from $\phi^*$ towards a {\bf stable} equilibrium set.

Furthermore, we can study the whole family of non-isolated equilibria given by 
\begin{equation}\label{eq:eq_family}
    \phi^*=\left[\varepsilon_1, \frac{\pi}{3} +\varepsilon_2,\frac{2\pi}{3} +\varepsilon_3,\pi+\varepsilon_1,\frac{4\pi}{3} +\varepsilon_2,
    \frac{5\pi}{3} +\varepsilon_3\right]^T
\end{equation}
where $\varepsilon_1,\varepsilon_2,\varepsilon_3\in\mathds R$, which due to Remark \ref{rem:generalization}, we can reduce \eqref{eq:eq_family} to  
\begin{equation}\label{eq:eq_family2}
    \phi^*=\left[0, \frac{\pi}{3} +\lambda_1,\frac{2\pi}{3} +\lambda_2,\pi,\frac{4\pi}{3} +\lambda_1,
    \frac{5\pi}{3} +\lambda_2\right]^T
\end{equation}
with $\lambda_1=\varepsilon_2-\varepsilon_1$ and $\lambda_2=\varepsilon_3-\varepsilon_1$. 

\begin{figure}[htp]

\includegraphics[height=.5\columnwidth,width=.95\columnwidth]
{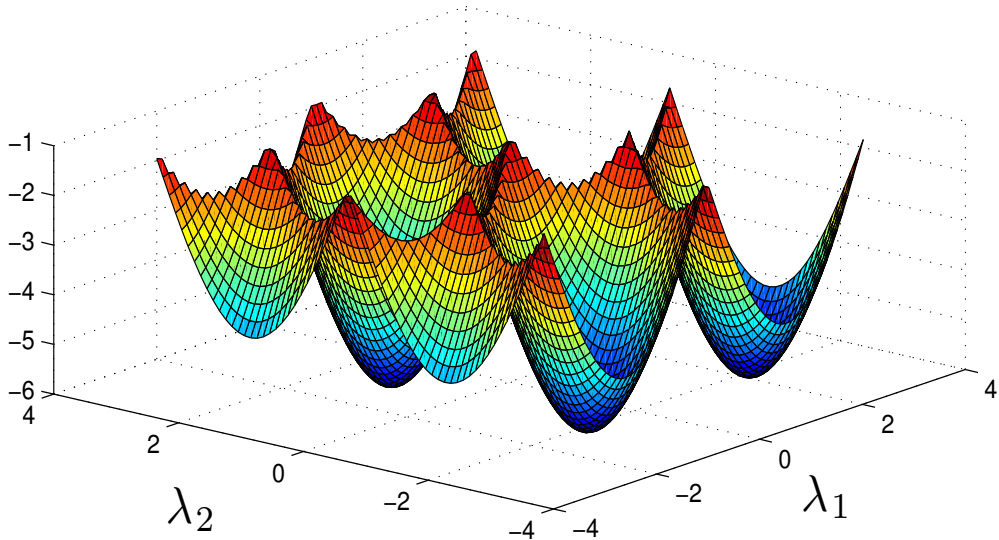}
\caption{Minimum cut value $C^*(\lambda_1,\lambda_2)$ showing that the equilibria \eqref{eq:eq_family} are unstable}\label{fig:eq_family}
\end{figure}

Instead of focusing on only one cut, here we compute the minimum cut value \eqref{eq:cut_instability_cond} over the 31 possible cuts, i.e. $C^*(\lambda_1,\lambda_2):=\min_P \sum_{ij\in C(P)} f'_{ij}(\phi_j(\lambda_1,\lambda_2)^*-\phi_i^*(\lambda_1,\lambda_2))$.
 Figure \ref{fig:eq_family} show the value of $C^*(\lambda_1,\lambda_2)$ for $\lambda_i\in[-\pi,\pi]$. Since $C^*(\lambda_1,\lambda_2)$ is $2\pi$-periodic on each variable and its value is negative for every $\lambda_1,\lambda_2\in[-\pi,\pi]$, the family of equilibria \eqref{eq:eq_family2} (and consequently \eqref{eq:eq_family}) is unstable.

\end{example}

\subsubsection{Almost Global Stability} \label{ssec:global_stability}

Condition \eqref{eq:cut_instability_cond} also  provides insight on which class of coupling functions can potentially give us almost global convergence to the in-phase equilibrium set $E_{\ones_N}$. If it is possible to find some $f_{ij}$ with $f'_{ij}(0)>0$, such that for any non-in-phase equilibrium $\phi^*$, there is a cut $C$ with $\sum_{ij\in C}{f'_{ij}(\phi_j^* - \phi_i^*)}<0$, then the in-phase equilibrium set will be almost globally stable \cite{Ermentrout1992StablePeriodic}. The main difficulty is that for general $f_{ij}$ and arbitrary network $G$, it is not easy to locate every phase-locked equilibria and thus, it is not simple to know in what region of the domain of $f_{ij}$ the slope should be negative.

We now concentrate on the one-parameter family of functions $\mathcal F_b$, with $b\in(0,\pi)$, such that $f_{ij}\in\mathcal F_b$ whenever $f_{ij}$ is {\bf symmetric}, {\bf odd}, {\bf continuously differentiable} and 
\begin{itemize}
\item $f'_{ij}(\theta;b)>0 \text{, } \forall \theta \in (0,b)\cup(2\pi-b,2\pi)$, and
\item $f'_{ij}(\theta;b)<0 \text{, } \forall \theta \in (b,2\pi-b)$.
\end{itemize}
See Figure \ref{fig:H_attractive_repulsive} for an illustration with $b=\frac{\pi}{4}$. Also note that this definition implies that if $f_{ij}(\theta;b)\in\mathcal F_b$, the coupling is attractive and $f_{ij}(\theta;b)>0$ $\forall \theta \in (0,\pi)$. This last property will be used later. We also assume the graph $G$ to be {\bf connected}.

%


In order to obtain almost global stability we need $b$ to be small. However, since the equilibria position is not known a priori, it is not clear how small $b$ should be or if there is any $b>0$ such that all nontrivial equilibria are unstable. We therefore first need to estimate the region of the state space that contains every non-trivial phase-locked solution.

Let $I$ be a compact connected subset of $\mathds S^1$ and let $l(I)$ be its length, e.g., if $I=\mathds S^1$ then $l(I)=2\pi$.
For any $S\subset V(G)$ and $\phi \in\mathcal T^N$,  define $d(\phi,S)$ as the length of the smallest interval $I$ such that $\phi_i\in I$ $\forall i\in S$, i.e.
\[
d(\phi,S)=l(I^*)=\min_{I:\phi_i\in I\text{, }\forall i\in S} l(I).
\]


Using this metric, together with the aid of Proposition 2.6 of
\cite{Brown&Holmes&Moehlis03globallycoupled} we can identify two very insightful properties of the family $\mathcal F_b$ whenever the graph $G$ is connected.

\begin{claim}\label{lemma:non_existence_of_non_trivial_eq}
If $\phi^*$ is an equilibrium point of \eqref{eq:F_dynamics} with $d(\phi^*,V(G)) \leq\pi$, then either $\phi^*$ is an in-phase equilibrium, i.e. $\phi^*=\lambda\ones_N$ for $\lambda\in \mathds R$, or has a cut $C$ with $f'_{ij}(\phi^*_j-\phi^*_i)<0$ $\forall ij\in C$.
\end{claim}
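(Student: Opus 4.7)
The plan is to analyze the equilibrium condition at the vertices that attain the extreme phases of the smallest arc containing $\phi^*$. Since $d(\phi^*,V(G))\leq\pi$, there is a closed arc $I^*$ of length $L=d(\phi^*,V(G))\leq\pi$ containing every $\phi^*_i$. After a rigid rotation we may assume $I^*=[0,L]$. Let $V_{\max}=\{i:\phi^*_i=L\}$ and $V_{\min}=\{i:\phi^*_i=0\}$; both sets are nonempty by the minimality of $L$.

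For any $i\in V_{\max}$, the equilibrium equation $\sum_{j\in\mathcal N_i}f_{ij}(\phi^*_j-\phi^*_i)=0$ only sees differences $\phi^*_j-\phi^*_i\in[-L,0]\subseteq[-\pi,0]$. Since $f_{ij}\in\mathcal F_b$ is odd and strictly positive on $(0,\pi)$, it is strictly negative on $(-\pi,0)$, and $f_{ij}(0)=f_{ij}(-\pi)=0$. Hence every summand is non-positive, so each must vanish, forcing each neighbor $j$ of $i\in V_{\max}$ to satisfy $\phi^*_j-\phi^*_i\in\{0,-\pi\}$; the value $-\pi$ is admissible only when $L=\pi$.

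From here I would split into two cases. If $L<\pi$, then every neighbor of a vertex in $V_{\max}$ also lies in $V_{\max}$; by connectedness of $G$ this propagates to give $V_{\max}=V(G)$, which is the in-phase conclusion $\phi^*=L\,\ones_N$. If $L=\pi$, the same local analysis (and the symmetric one applied to $V_{\min}$) shows that the neighborhood of $V_{\max}\cup V_{\min}$ is contained in $V_{\max}\cup V_{\min}$; since $G$ is connected and $V_{\max}\cup V_{\min}$ is nonempty, this forces $V(G)=V_{\max}\cup V_{\min}$. Then take $C=C(V_{\max},V_{\min})$: every edge $ij\in C$ has $|\phi^*_j-\phi^*_i|=\pi$, and because $b<\pi$ the definition of $\mathcal F_b$ yields $f'_{ij}(\pm\pi)<0$ (using also that $f'_{ij}$ is even, as the derivative of an odd function), giving a cut whose slopes are uniformly negative.

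The main obstacle I anticipate is the degenerate boundary case $L=\pi$: because $f_{ij}(-\pi)=0$, the naive sign argument that suffices for $L<\pi$ breaks down, and one must carefully combine oddness and $2\pi$-periodicity to confine all vertices to the two antipodal sets $V_{\max}$ and $V_{\min}$ before extracting the desired cut. Everything else is a clean application of the equilibrium identity, connectedness of $G$, and the shape of functions in $\mathcal F_b$.
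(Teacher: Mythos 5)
Your proposal is correct and follows essentially the same route as the paper's proof: examine the equilibrium condition at an extremal-phase oscillator, use that $f_{ij}>0$ on $(0,\pi)$ (hence single-signed on a half circle) to force every neighbor difference into $\{0,\pi\}$, and conclude either in-phase synchrony or an antipodal partition whose cut edges all have $f'_{ij}(\pm\pi)<0$. The only differences are cosmetic --- you anchor at the maximal rather than the minimal phase and make the connectedness propagation and the $L=\pi$ boundary case explicit, which the paper leaves implicit.
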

\begin{proof}
Since $d(\phi^*,V(G)) \leq\pi$, all the phases are contained in a half circle and for the oscillator with smallest phase $i_0$, all the phase differences $(\phi^*_j-\phi^*_{i_0})\in[0,\pi]$. However, since $f_{ij}(\cdot;b)\in \mathcal F_b$ implies  $f_{ij}(\theta;b)\geq0$ $\forall \theta \in [0,\pi]$ with equality only for $\theta\in\{0,\pi\}$, $\dot \phi^*_{i_0}=\sum_{j\in\mathcal N_{i_0}} f_{ij}(\phi^*_j-\phi^*_{i_0})=0$ can only hold if $\phi_j^*-\phi_{i_0}^*\in\{0,\pi\}$ $\forall j\in\mathcal N_i$.  Now let $V^-=\{ i\in V(G): d(\phi^*,\{i,i_0\})=0\}$ and $V^+=V(G)\backslash V^-$. If $V^-=V(G)$, then $\phi^*$ is an in-phase equilibrium. Otherwise, $\forall ij\in C(V^-,V^+)$,  $f_{ij}(\phi_j^*-\phi_i^*)=f_{ij}(\pi)<0$. 
\end{proof}

We are now ready to establish a bound on the value of $b$ that guarantees the instability of the non-in-phase equilibria.

\begin{claim}\label{lemma:bound_on_b}
Consider $f_{ij}(\cdot;b)\in \mathcal F_b$ $\forall ij \in E(G)$ and arbitrary connected graph $G$. Then for any $b\leq\frac{\pi}{N-1}$ and non-in-phase equilibrium $\phi^*$, there is a cut $C$ with
$
f'_{ij}(\phi_j^*-\phi_i^*;b)<0, \forall ij\in C
$
\end{claim}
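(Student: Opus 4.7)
The plan is to dichotomize on whether $d(\phi^*, V(G)) \leq \pi$, so that Claim 1 directly supplies the cut, or $d(\phi^*, V(G)) > \pi$, where I would run a pigeonhole argument on the arc gaps between phases on $\mathds{S}^1$.

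In the first case, $\phi^*$ is not in-phase yet all phases lie in a semicircle, so Claim 1 yields a cut $C$ on which every edge satisfies $f'_{ij}(\phi_j^* - \phi_i^*; b) < 0$: the cut edges carry phase difference $\pi$, and $\pi \in (b, 2\pi - b)$ because $b < \pi$, so the slope is negative by the definition of $\mathcal F_b$.

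In the second case I would place the $N$ phases cyclically on $\mathds{S}^1$, let $g_1, \dots, g_N$ be the resulting consecutive arc gaps (summing to $2\pi$), and observe that $d(\phi^*, V(G)) = 2\pi - \max_k g_k$, so $d > \pi$ becomes $\max_k g_k < \pi$. Call a gap \emph{big} when it exceeds $b$. The key claim is that at least two gaps must be big: otherwise the at most $N - 1$ non-big gaps each satisfy $g_k \leq b \leq \pi/(N-1)$, summing to at most $\pi$, which forces the single remaining gap to be $\geq \pi$ and contradicts $\max_k g_k < \pi$. Consequently the phases decompose into $m \geq 2$ cyclic clusters $A_1, \dots, A_m$ separated by big gaps. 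I would then take $V^- = A_1$, $V^+ = V(G) \setminus A_1$; both are nonempty, and since $G$ is connected $C(V^-, V^+)$ has at least one edge. For any $ij$ in that cut, the two arcs joining $\phi_i^*$ and $\phi_j^*$ on $\mathds{S}^1$ must each traverse a big gap and hence have length $> b$, so $\phi_j^* - \phi_i^* \pmod{2\pi} \in (b, 2\pi - b)$, which is precisely the region on which $\mathcal F_b$ prescribes $f'_{ij}(\cdot; b) < 0$.

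The hard part will be producing \emph{two} big gaps in the second case, since a single big gap would only allow a one-sided separation insufficient for a set-versus-set cut. That is exactly where the quantitative hypothesis $b \leq \pi/(N-1)$ is used at full strength; the pigeonhole is tight at this bound. Everything else---translating $d > \pi$ into $\max_k g_k < \pi$, the cluster construction, and checking that both arcs from $A_1$ to another cluster cross a big gap---is routine once the two-big-gaps claim is established.
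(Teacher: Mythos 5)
Your proof is correct, and it reaches the conclusion by a genuinely different mechanism than the paper. The paper argues by contradiction with a greedy growth along graph edges: assuming no valid cut exists, it starts from $V^-_0=\{i_0\}$ and repeatedly uses the nonexistence of the cut to find an edge $i_{k-1}j_k$ with $f'\geq 0$, which forces $d(\phi^*,\{i_{k-1},j_k\})\leq b$ and hence $d(\phi^*,V^-_k)\leq kb$; after $N-1$ steps this gives $d(\phi^*,V(G))\leq (N-1)b\leq\pi$, and Claim \ref{lemma:non_existence_of_non_trivial_eq} then produces the contradiction. You instead dichotomize directly on the spread: when $d(\phi^*,V(G))\leq\pi$ you invoke Claim \ref{lemma:non_existence_of_non_trivial_eq} exactly as the paper ultimately does, and when $d(\phi^*,V(G))>\pi$ you construct the cut explicitly via a pigeonhole on the cyclic gaps. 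Your case-2 statement is logically the contrapositive of the paper's implication ``no valid cut $\Rightarrow$ spread $\leq\pi$,'' but your argument for it is purely geometric: it never touches the edge set of $G$ except to note that connectivity makes the final cut nonempty, whereas the paper's greedy step crucially walks along actual graph edges at every iteration. Your route buys two things: it makes the role of the bound $b\leq\frac{\pi}{N-1}$ completely transparent (it is exactly what forces a second gap exceeding $b$), and it shows that in the large-spread case the cut exists for \emph{any} phase configuration, not only equilibria --- the equilibrium hypothesis is used only through Claim \ref{lemma:non_existence_of_non_trivial_eq} in the small-spread case. The paper's version, in exchange, is self-contained in graph-theoretic terms and avoids the cyclic-ordering bookkeeping. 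One small wording slip in your write-up: if at most one gap is big there are \emph{at least} $N-1$ non-big gaps, not ``at most''; the intended argument (the $N-1$ gaps other than the largest are each $\leq b$, so they sum to at most $\pi$ and the largest must be $\geq\pi$) is nonetheless correct, and the zero-big-gap subcase is only an issue for $N=2$, where $d>\pi$ is impossible anyway.
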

\begin{proof}
Suppose there is a non-in-phase equilibrium $\phi^*$ for which no such cut $C$ exists. Let $V^-_0=\{i_0\}$ and
$V^+_0=V(G)\backslash \{i_0\}$ be a partition of $V(G)$ for some arbitrary node $i_0$.

Since such $C$ does not exists, there exists some edge $i_0j_1\in C(V^-_0,V^+_0)$,
with $j_1\in V^+_0$, such that $f'_{i_0j_1}(\phi_{j_1}^*-\phi_{i_0}^*;b)\geq0$. Move $j_1$ from one side to the other of the partition by defining $V^-_1:=V^-_0 \cup \{j_1\}$ and
$V^+_1:=V^+_0 \backslash \{j_1\}$. Now since $f'_{i_0j_1}(\phi_{j_1}^*-\phi_{i_0}^*;b)\geq0$,
then
\[
d(\phi^*,V^-_1)\leq b.
\]
In other words, both phases should be within a distance smaller than $b$.

Now repeat the argument $k$ times. At the $k^{th}$ iteration, given $V^-_{k-1}$, $V^+_{k-1}$, again
we can find some $i_{k-1}\in V^-_{k-1}$, $j_k\in V^+_{k-1}$ such that $i_{k-1}j_k \in  C(V^-_{k-1},V^+_{k-1})$ and
$f'_{i_{k-1}j_k}(\phi_{j_k}^*-\phi_{i_{k-1}}^*;b)\geq0$.
Also, since at each step $d(\phi^*, \{i_{k-1},j_k\})\leq b$,
\[
d(\phi^*,V^-_k)\leq b + d(\phi^*,V^-_{k-1}).
\]
Thus by solving the recursion we get:
$
d(\phi^*,V^-_k)\leq kb.
$

After $N-1$ iterations we have $V^-_{N-1}=V(G)$ and $d(\phi^*,V(G))\leq (N-1)b$.
Therefore, since $b\leq\frac{\pi}{N-1}$, we obtain
\[
d(\phi^*,V(G))\leq(N-1)\frac{\pi}{N-1}=\pi.
\]
Then, by Claim \ref{lemma:non_existence_of_non_trivial_eq} $\phi^*$ is either an in-phase equilibrium or there is a cut $C$ with $f'_{ij}(\phi^*_j-\phi^*_i)<0$ $\forall ij\in C$. Either case gives a contradiction to assuming that $\phi^*$ is a non-in-phase equilibrium and $C$ does not exists. Therefore, for any non-in-phase $\phi^*$ and $b\leq\frac{\pi}{N-1}$, we can
always find a cut $C$ with $f_{ij}(\phi_j^*-\phi_i^*;b)<0$, $\forall ij\in C$.
\end{proof}

Claim \ref{lemma:bound_on_b} allows us to use our cut condition \eqref{eq:cut_instability_cond} on every non-in-phase equilibrium.
Thus, since \eqref{eq:F_dynamics} is a potential dynamics (c.f. Section \ref{ssec:potential}), from every initial condition the system converges to the set of equilibria $E$. But when $b\leq\frac{\pi}{N-1}$ the only stable equilibrium set inside $E$ is the in-phase set $E_{\ones_N}$. Thus, $E_{\ones_N}$ set is globally asymptotically stable.  We summarized this result in the following Proposition.

%
%
\begin{proposition}[Almost global stability]
Consider $f_{ij}(\theta;b)\in \mathcal F_b$ and an arbitrary connected
graph $G$. Then, if $b\leq\frac{\pi}{N-1}$, the in-phase equilibrium set $E_{\ones_N}$ is {\bf almost
globally asymptotically stable}.
\label{prop:main}
\end{proposition}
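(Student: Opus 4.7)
The proof I would write assembles the machinery developed in Sections \ref{sec:prelim}, \ref{ssec:local_stability}, and in Claims \ref{lemma:non_existence_of_non_trivial_eq} and \ref{lemma:bound_on_b}. The high-level strategy has three ingredients: (i) convergence of every trajectory to the equilibrium set, (ii) instability of every non-in-phase equilibrium, and (iii) a measure-zero argument for basins of attraction of unstable equilibria. Each ingredient has essentially been set up earlier, so the proof is mostly an orchestration.

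First I would invoke the potential structure derived in Section \ref{ssec:potential}: since $f_{ij}$ is symmetric and odd, \eqref{eq:F_dynamics} is the gradient flow $\dot\phi=-\varepsilon\nabla V(\phi)$ on the compact manifold $\mathcal T^N$, and by \eqref{eq:Vdot} together with LaSalle's invariance principle every trajectory converges to the equilibrium set $E=\bigcup_{\phi^*}E_{\phi^*}$. Thus it suffices to show that all equilibrium sets other than $E_{\ones_N}$ are unstable.

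Next I would combine Claim \ref{lemma:bound_on_b} with the negative cut instability condition \eqref{eq:cut_instability_cond}. Let $\phi^*$ be any non-in-phase equilibrium. By Claim \ref{lemma:bound_on_b}, since $b\leq\pi/(N-1)$, there exists a cut $C$ with $f'_{ij}(\phi^*_j-\phi^*_i;b)<0$ for every $ij\in C$; in particular the sum $\sum_{ij\in C} f'_{ij}(\phi^*_j-\phi^*_i;b)<0$, so the analysis of Section \ref{ssec:local_stability} yields a strictly positive eigenvalue of the linearization $A$ at $\phi^*$. By Remark \ref{rem:generalization} this instability is inherited by the whole equivalence class $E_{\phi^*}$. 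Consequently, each non-in-phase equilibrium set has a nontrivial unstable manifold, while the in-phase set $E_{\ones_N}$ (where $f'_{ij}(0;b)>0$ guarantees strict attractiveness in the directions transverse to $E_{\ones_N}$) is asymptotically stable.

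Finally I would upgrade ``all non-in-phase equilibria are unstable'' to almost global stability of $E_{\ones_N}$. For a gradient flow on a compact manifold, the omega-limit set of any trajectory is a connected subset of $E$, so each trajectory converges to a single equilibrium component. By the stable-manifold theorem, the stable manifold of any equilibrium with at least one positive linearization eigenvalue has codimension $\geq 1$ in $\mathcal T^N$ and hence Lebesgue measure zero; a finite (or at worst countable and stratified) union of such sets still has measure zero. Hence the complement of the basin of $E_{\ones_N}$ has measure zero, which is precisely almost global asymptotic stability in the sense of \cite{Rantzer2001dualtoLypunov}.

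The main obstacle I anticipate is the last step: to invoke the stable-manifold argument cleanly I need non-in-phase equilibria to be at worst normally hyperbolic along $E_{\phi^*}$, since equilibria come in one-parameter families $\phi^*+\lambda\ones_N$ and are therefore never isolated. The cited result \cite{Ermentrout1992StablePeriodic} handles exactly this situation for the gradient structure at hand, so I would lean on it; an alternative, if one wants to avoid the citation, is to quotient out the $\ones_N$ direction (working on $\mathcal T^N/\lspan\{\ones_N\}$) where $V$ descends to a well-defined Morse-like function whose non-minimum critical points are unstable, and then apply the standard measure-zero result for basin boundaries of gradient flows on a compact manifold.
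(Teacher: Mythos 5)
Your proposal follows essentially the same route as the paper: the gradient/potential structure of \eqref{eq:F_dynamics} plus LaSalle gives convergence of every trajectory to the equilibrium set $E$, Claim \ref{lemma:bound_on_b} combined with the negative cut instability condition \eqref{eq:cut_instability_cond} shows every non-in-phase equilibrium set is unstable, and a measure-zero argument for the basins of the unstable sets (which the paper delegates entirely to the citation of \cite{Ermentrout1992StablePeriodic}) yields almost global stability. Your explicit attention to the non-isolated, one-parameter families $E_{\phi^*}$ and the normal-hyperbolicity issue in the final step is more careful than the paper's one-paragraph treatment, but it is the same argument.
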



This result provides a sufficient condition for almost global asymptotic stability to the in-phase equilibrium set $E_{\ones_N}$. {Although found independently, the same condition was proposed} for a specific piecewise linear $f_{ij}$ in \cite{Sarlette2009PHDThesis}. Here we extend \cite{Sarlette2009PHDThesis} in many aspects.  For example,  instead of assuming equal coupling for every edge, our condition describes a large family of coupling functions $\mathcal F_b$ where each $f_{ij}$ can be taken independently from $\mathcal F_b$. Also, in~\cite{Sarlette2009PHDThesis} the construction of $f_{ij}(\theta)$ assumes a discontinuity on the derivative at $\theta=b$. This can pose a problem if the equilibrium $\phi^*$ happens to have phase differences $\phi_j^*-\phi^*_i=b$. Here we do not have such problem as $f_{ij}$ is continuously differentiable. 

The condition $b\leq\frac{\pi}{N-1}$ implies that, when $N$ is large, $f_{ij}$ should be decreasing in most of it domain. Using \eqref{eq:f_kappa} this implies that $\kappa_{ij}$ should be increasing within the region $(b,2\pi-b)$, which is similar to the condition on \cite{Mirollo&Strogatz90SynchronizationofPulseCoupled} and equivalent when $b\rightarrow 0$. Thus, Proposition \ref{prop:main} confirms the conjecture of~\cite{Mirollo&Strogatz90SynchronizationofPulseCoupled} by extending their result to arbitrary topologies and a more realistic continuous $\kappa_{ij}$ for the system \eqref{eq:pc_model} in the weak coupling limit. 


\subsection{Complete Graph Topology with a Class of Coupling Functions}
\label{sec:coupling}
In this subsection we investigate how conservative the value of $b$ found in Section \ref{ssec:global_stability} is for the complete graph topology. We are motivated by the results of \cite{Monzon&Paganini2005gobalconsidarations} where it is shown that $f(\theta)=\sin(\theta)$ ($b=\frac{\pi}{2}$) with complete graph topology ensures almost global synchronization.

Since for general $f$ it is not easy to characterize all the possible equilibria of the system, we study the stability of the equilibria that appear due to the equivalence of \eqref{eq:F_dynamics} with respect to the action group $S_N\times T^1$, where $S_N$ is the group of permutations of the $N$ coordinates and $T^1=[0,2\pi)$ represents the group action  of phase shift of all the coordinates, i.e. the action of $\delta\in T^1$ is $\phi_i\mapsto\phi_i+\delta$ $\forall i$.
We refer the readers to \cite{Ashwin&Swift92thedynamics} and \cite{Brown&Holmes&Moehlis03globallycoupled} for a detailed study of the effect of this property.

These equilibria are characterized by the isotropy subgroups $\Gamma$ of $S_N\times T^1$ that keep them fixed, i.e., $\gamma\phi^*=\phi^*$ $\forall \gamma\in\Gamma$. In \cite{Ashwin&Swift92thedynamics} it was shown that this isotropy subgroup takes the form of
\[
(S_{k_0}\times S_{k_1}\times\dots\times S_{k_{l_B-1}})^m\rtimes Z_m
\]
where $k_i$ and $m$ are positive integers such that $(k_0+k_1+\dots+k_{l_B-1})m=N$, $S_j$ is the permutation subgroup of $S_N$ of $j$-many coordinates and $Z_m$ is the cyclic group with action $\phi_i\mapsto \phi_i+\frac{2\pi}{m}$. The semiproduct $\rtimes$ represents the fact that $Z_m$ does not commute with the other subgroups.

In other words, each equilibria with isotropy $(S_{k_0}\times S_{k_1}\times\dots\times S_{k_{l_B-1}})^m\rtimes Z_m$ is conformed by $l_B$ shifted constellations $C_l$ ($l\in \{0,1,\dots l_B-1\}$) of $m$ evenly distributed blocks, with $k_l$ oscillators per block. We use $\delta_l$ to denote the phase shift between constellation $C_0$ and $C_l$. See Figure \ref{fig:symmetric_equilibria} for examples these types of equilibria.

\begin{figure}[htp]
\centering
\includegraphics[width=.3\columnwidth]{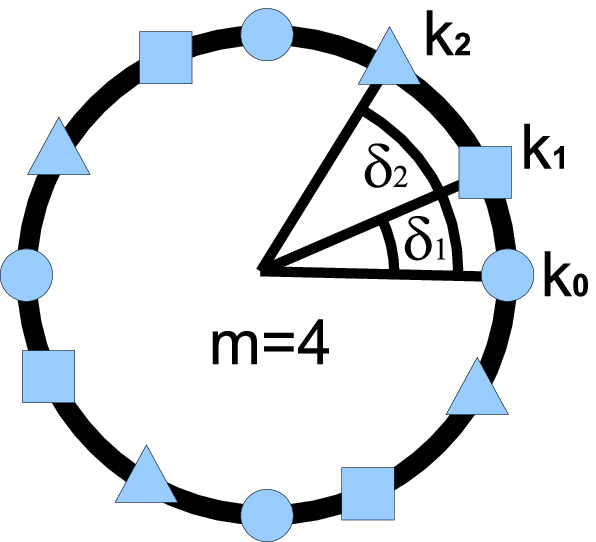}
\includegraphics[width=.3\columnwidth]{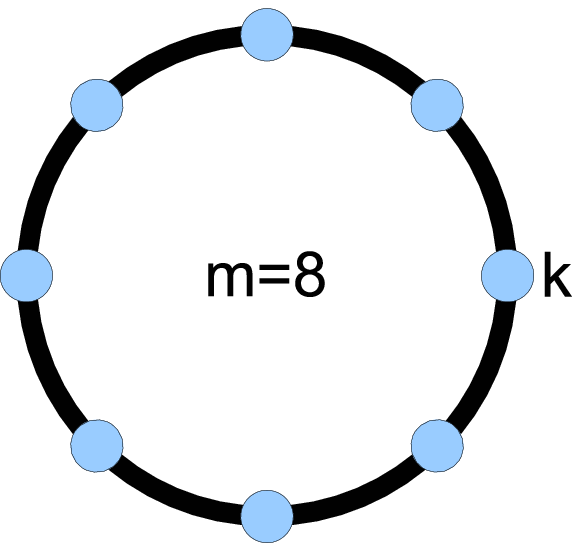}
\caption{ Equilibria with isotropy $(S_{k_0}\times S_{k_1} \times S_{k_2})^4\rtimes Z_4$ (left) and $(S_k)^8\rtimes Z_8$ (right)}\label{fig:symmetric_equilibria}
\end{figure}

Here we will show that under mild assumptions on $f$ and for $b=\frac{\pi}{2}$ most of the equilibria found with these characteristics are unstable. We first study all the equilibria with $m$ even. In this case there is a special property that can be exploited. 

That is, when $f\in \mathcal F_{\frac{\pi}{2}}$ such that $f$ is even around  $\frac{\pi}{2}$, we have
\begin{align}
g_m(\delta)&:=\sum_{j=0}^{m-1}f(\frac{2\pi}{m}j + \delta) \label{eq:g_m}
\\&=\sum_{j=0}^{{m}/{2}-1}f(\frac{2\pi}{m}j + \delta)+f(\pi+\frac{2\pi}{m}j + \delta) \nonumber
\\&=\sum_{j=0}^{{m}/{2}-1}f(\frac{2\pi}{m}j + \delta)+f((\frac{3\pi}{2}+\frac{2\pi}{m}j + \delta)-\frac{\pi}{2})\nonumber \\
&=\sum_{j=0}^{{m}/{2}-1}f(\frac{2\pi}{m}j + \delta)+f(-(\frac{2\pi}{m}j + \delta))\nonumber
\\&=\sum_{j=0}^{{m}/{2}-1}f(\frac{2\pi}{m}j + \delta)-f(\frac{2\pi}{m}j + \delta)=0 \nonumber
\end{align}
where the third step comes from $f$ being even around $\pi/2$ and $2\pi$-periodic, and the fourth from $f$ being odd.


Having $g_m(\delta)=0$ is the key to prove the instability of every equilibria with even $m$. It essentially states that the aggregate effect of one constellation $C_l$ on any oscillator $j\in V(G)\backslash C_l$ is zero when $m$ is even, and therefore any perturbation that maintains $C_l$  has null effect on $j$. This is shown in the next proposition.


\begin{figure}[htp]
\centering
\includegraphics[width=.4\columnwidth]{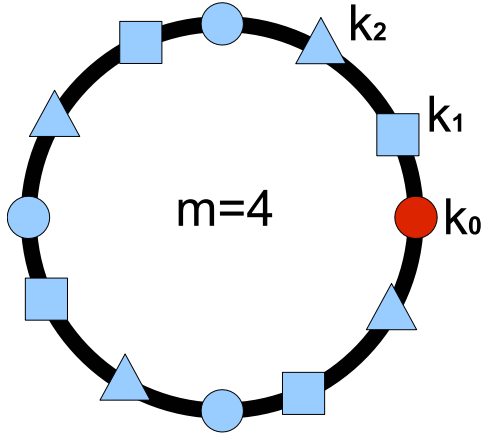}
\caption{ Cut of Proposition \ref{prop:m_even}, the red block represents one possible set $V_0$ }\label{fig:thm_m_even}
\end{figure}

\begin{proposition}[Instability for even $m$ ]\label{prop:m_even}
Given an equilibrium $\phi^*$ with isotropy  $(S_{k_1}\times S_{k_2}\times\dots\times S_{k_{l_B}})^m\rtimes Z_m$ and $f\in \mathcal F_{\frac{\pi}{2}}$ even around $\frac{\pi}{2}$. Then, if $m$ is even, $\phi^*$ is unstable.
\end{proposition}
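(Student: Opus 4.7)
The plan is to apply the negative cut instability criterion \eqref{eq:cut_instability_cond}: it suffices to exhibit a single partition $(V^-,V^+)$ whose cut sum is strictly negative.

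Motivated by Figure \ref{fig:thm_m_even}, I would take $V_0:=V^-$ to consist of the $k_{l_0}$ oscillators forming one entire block of one constellation---say block $j_0$ of constellation $C_{l_0}$, whose oscillators all share the common phase $\phi_0^*=\tfrac{2\pi}{m}j_0+\delta_{l_0}$---and let $V^+:=V(G)\setminus V_0$. Because the underlying graph is complete and every edge carries the same coupling function $f$, the cut sum collapses to
\[
S \;=\; k_{l_0}\sum_{j\in V(G)\setminus V_0} f'(\phi_j^*-\phi_0^*).
\]

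I then split the outer sum by constellation. The remaining $m-1$ blocks of $C_{l_0}$ contribute $k_{l_0}^2\sum_{s=1}^{m-1}f'(\tfrac{2\pi s}{m})$, while each foreign constellation $C_{l'}$ ($l'\neq l_0$) contributes $k_{l_0}k_{l'}\,g_m'(\delta_{l'}-\delta_{l_0})$ with $g_m$ as in \eqref{eq:g_m}. The identity $g_m(\delta)\equiv 0$ already proved in \eqref{eq:g_m} for even $m$ is then used in two complementary ways: read as an identity in $\delta$, its derivative $g_m'(\cdot)\equiv 0$ annihilates every foreign-constellation contribution; differentiated and evaluated at $\delta=0$ it yields $f'(0)+\sum_{s=1}^{m-1}f'(\tfrac{2\pi s}{m})=0$, pinning the within-constellation sum to $-f'(0)$. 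Plugging back gives $S=-k_{l_0}^2 f'(0)$, which is strictly negative because $f\in\mathcal F_{\pi/2}$ (together with continuous differentiability and positivity of $f$ on $(0,\pi)$) forces $f'(0)>0$. Instability then follows from \eqref{eq:cut_instability_cond}.

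The main obstacle I expect is recognising this dual role of the identity $g_m\equiv 0$: once as an identity in the phase shift $\delta$ (so that its derivative also vanishes, killing the cross-constellation terms), and once pointwise at zero (so that its derivative there collapses the intra-constellation sum to $-f'(0)$). The other subtle choice is taking $V_0$ to be a \emph{whole} block; a strict subset of a block would introduce intra-block cut edges whose $f'(0)>0$ contributions could drown out the negative gain and ruin the argument.
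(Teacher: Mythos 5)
Your proposal is correct and follows essentially the same route as the paper: cut out one whole block of one constellation, observe that the cross-constellation contributions are multiples of $g_m'(\delta)$ which vanish identically by \eqref{eq:g_m}, and that the same identity at $\delta=0$ reduces the within-constellation contribution to $-f'(0)$ times a positive factor, so the cut condition \eqref{eq:cut_instability_cond} applies. Your explicit bookkeeping of the $k_{l}$ factors and the remark on why $V_0$ must be a full block are slightly more careful than the paper's one-line computation, but the argument is the same.
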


\begin{proof}
We will show the instability of $\phi^*$ by finding a cut of the network satisfying \eqref{eq:cut_instability_cond}. Let $V_0\subset V(G)$ be the set of nodes within one of the blocks of the constellation $C_0$ and consider the partition induced by $V_0$, i.e. $P=(V_0,V(G)\backslash V_0)$. Due to the structure of $\phi^*$, \eqref{eq:cut_instability_cond} becomes
\begin{align*}
\sum_{ij\in C(P)} f'(\phi_j^* - \phi_i^*)&=-k_1f'(0)+\sum_{l=1}^{l_B}k_l g_m'(\delta_l),
\end{align*}
where $g_m'(\delta)$ is the derivative of $g_m$ and $\delta_l$ is the phase shift between the $C_0$ and $C_l$.
Finally, since by assumptions $g_m(\delta)\equiv 0$ $\forall \delta$ then it follows that $g_m'(\delta)\equiv 0$ and
\begin{align*}
\sum_{ij\in C(P)} f'_{ij}(\phi_j^* - \phi_i^*)&=-k_1f'(0)<0.
\end{align*}
Therefore, by  \eqref{eq:cut_instability_cond}, $\phi^*$ is unstable.
\end{proof}

The natural question that arises is whether similar results can be obtained for $m$ odd. The main difficulty in this case is that $g_m(\delta)=0$ does not  hold since we no longer evaluate $f$ at points with phase difference equal to $\pi$ such that they cancel each other. Therefore, an extra monotonicity condition needs to be added in order to partially answer this question. These conditions and their effects are summarized in the following claims.

\begin{claim}[Monotonicity]\label{claim:monotonicity}
Given $f\in \mathcal F_{\frac{\pi}{2}}$ such that $f$ is strictly concave for $\theta\in[0,\pi]$, then
\begin{align}
f'(\theta)-f'(\theta-\phi)<0,\quad 0\leq\theta-\phi<\theta\leq\pi \label{eq:mon_decreasing}\\
f'(\theta)-f'(\theta+\phi)<0,\quad -\pi\leq\theta<\theta+\phi\leq0 \label{eq:mon_increasing}
\end{align}
\end{claim}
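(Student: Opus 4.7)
The plan is to reduce both inequalities to the single monotonicity statement that $f'$ is strictly decreasing on $[0,\pi]$, which follows immediately from the hypothesis that $f$ is strictly concave on $[0,\pi]$.

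First I would handle \eqref{eq:mon_decreasing} directly. Given $0 \leq \theta - \phi < \theta \leq \pi$, both arguments $\theta - \phi$ and $\theta$ lie in $[0,\pi]$, so strict concavity of $f$ on this interval gives $f'(\theta) < f'(\theta - \phi)$, which is exactly the desired inequality. No other properties of $f$ are needed for this part.

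For \eqref{eq:mon_increasing}, the key observation is that $f$ being odd (since $f \in \mathcal{F}_{\pi/2}$) implies that $f'$ is an \emph{even} function: differentiating $f(-x) = -f(x)$ yields $-f'(-x) = -f'(x)$, hence $f'(-x) = f'(x)$. So given $-\pi \leq \theta < \theta + \phi \leq 0$, I would set $\tilde\theta = -\theta$ and $\tilde\phi = \phi$ so that $0 \leq \tilde\theta - \tilde\phi < \tilde\theta \leq \pi$, then rewrite $f'(\theta) = f'(\tilde\theta)$ and $f'(\theta + \phi) = f'(-(\tilde\theta - \tilde\phi)) = f'(\tilde\theta - \tilde\phi)$. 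The first inequality \eqref{eq:mon_decreasing} applied to $\tilde\theta, \tilde\phi$ then gives $f'(\tilde\theta) - f'(\tilde\theta - \tilde\phi) < 0$, which is precisely $f'(\theta) - f'(\theta + \phi) < 0$.

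There is no real obstacle here; the content of the claim is essentially a restatement of strict monotonicity of $f'$ on $[0,\pi]$ combined with the symmetry $f'(-x) = f'(x)$ induced by oddness of $f$. The only subtlety worth noting is that the endpoint cases (equality in $\theta - \phi = 0$ or $\theta = \pi$, and symmetrically for the second line) are covered because strict concavity gives strict inequality of $f'$ at any two distinct points of $[0,\pi]$, and we have $\theta - \phi < \theta$ (resp.\ $\theta < \theta + \phi$) strictly by hypothesis.
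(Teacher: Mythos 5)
Your proposal is correct and follows essentially the same route as the paper: strict concavity gives strict monotonicity of $f'$ on $[0,\pi]$ for the first inequality, and oddness of $f$ handles the second (you phrase this as $f'$ being even, the paper as $f$ being strictly convex on the mirrored interval, but these are the same observation). No gaps.
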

\begin{proof} The proof is a direct consequence of the strict concavity of $f$. Since $f(\theta)$ is strictly concave then basic convex analysis shows that $f'(\theta)$ is strictly decreasing  within $[0,\pi]$. Therefore, the inequality \eqref{eq:mon_decreasing} follows directly from the fact that $\theta\in[0,\pi]$,$\theta-\phi\in[0,\pi]$ and $\theta-\phi<\theta$. To show \eqref{eq:mon_increasing} it is enough to notice that since $f$ is odd ( $f\in\mathcal F_{\frac{\pi}{2}}$), $f$ is strictly convex in $[\pi,2\pi]$. The rest of the proof is analogous to \eqref{eq:mon_decreasing}.
\end{proof}

\begin{claim}[$f'$ Concavity]\label{claim:concavity}
Given $f\in \mathcal F_{\frac{\pi}{2}}$ such that $f'$ is strictly concave for $\theta\in[-\frac{\pi}{2},\frac{\pi}{2}]$. Then for all $m\geq 4$, $f'(\frac{\pi}{m})\geq \frac{1}{2}f'(0)$.
\end{claim}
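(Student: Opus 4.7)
The plan is to show this by identifying the boundary values of $f'$ on $[-\frac{\pi}{2},\frac{\pi}{2}]$ and then applying the concavity hypothesis via a chord inequality.

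First, I would pin down the values $f'(\pm\pi/2)=0$. Membership $f\in\mathcal F_{\pi/2}$ forces $f'(\theta)>0$ on $(0,\pi/2)$ and $f'(\theta)<0$ on $(\pi/2,3\pi/2)$; continuous differentiability of $f$ then makes $f'$ continuous, so the sign-transition value $f'(\pi/2)$ must equal $0$. Because $f$ is odd, $f'$ is even, which gives $f'(-\pi/2)=f'(\pi/2)=0$ as well. (As a side remark, $f'$ concave on $[-\pi/2,\pi/2]$ with these boundary values and $f'\ge 0$ on this interval forces its maximum to be at $0$, so $f'(0)\ge 0$; in fact $f'(0)>0$ since otherwise concavity would force $f'\equiv 0$ on the interval.)

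Next I would invoke concavity in its chord form. Since $f'$ is (strictly) concave on $[-\pi/2,\pi/2]$, its graph lies above every chord on that interval. Taking the chord joining $(0,f'(0))$ and $(\pi/2,0)$, for each $\theta\in[0,\pi/2]$ we obtain
\begin{equation*}
f'(\theta)\;\ge\;\left(1-\tfrac{2\theta}{\pi}\right)f'(0)+\tfrac{2\theta}{\pi}\cdot 0
\;=\;\left(1-\tfrac{2\theta}{\pi}\right)f'(0).
\end{equation*}

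Finally, I would specialize this inequality at $\theta=\pi/m$, yielding
\begin{equation*}
f'\!\left(\tfrac{\pi}{m}\right)\;\ge\;\left(1-\tfrac{2}{m}\right)f'(0).
\end{equation*}
For $m\ge 4$ we have $1-\tfrac{2}{m}\ge \tfrac{1}{2}$, and combined with $f'(0)\ge 0$ this gives $f'(\pi/m)\ge \tfrac{1}{2}f'(0)$, as desired.

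I do not expect a real obstacle here; the only delicate step is justifying the boundary value $f'(\pi/2)=0$, which is immediate from continuity of $f'$ together with the sign change dictated by $f\in\mathcal F_{\pi/2}$. The rest is the standard chord-lies-below-the-graph property for concave functions, evaluated at a convenient interior point.
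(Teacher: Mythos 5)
Your proof is correct and follows essentially the same route as the paper: both apply the concavity (chord) inequality to $f'$ on $[0,\tfrac{\pi}{2}]$, writing $\tfrac{\pi}{m}$ as the convex combination $\lambda_m\cdot 0+(1-\lambda_m)\tfrac{\pi}{2}$ with $\lambda_m=1-\tfrac{2}{m}\ge\tfrac12$ for $m\ge4$. You are in fact slightly more careful than the paper in explicitly justifying $f'(\tfrac{\pi}{2})=0$ (the paper silently drops that term with a strict inequality), but this is the same argument.
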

\begin{proof}
Since $f'(\theta)$ is concave for $\theta\in[-\pi,\pi]$ then it follows
\begin{align*}
f'(\frac{\pi}{m})&=f'(\lambda_m 0+(1-\lambda_m) \frac{\pi}{2})> \lambda_m f'(0) + (1-\lambda_m)f'(\frac{\pi}{2})> \lambda_m f'(0)
\end{align*}
where $\lambda_m=\frac{m-2}{m}$. Thus, for $m\geq4$, $\lambda_m\geq\frac{1}{2}$ and
\[
f'(\frac{\pi}{m})> \frac{1}{2}f'(0)
\]
as desired.
\end{proof}


\begin{figure}[htp]
\centering
\includegraphics[width=.4\columnwidth]{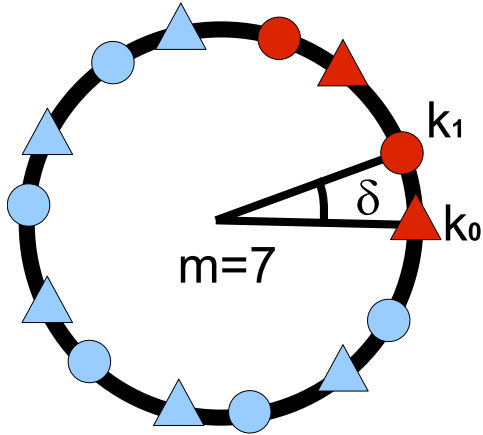}
\caption{ Cut used in Proposition \ref{prop:m_odd}. The dots in red represent all the oscillators of some maximal set $S$ with $d(\phi^*,S)<\frac{4\pi}{m}$ }\label{fig:thm_m_odd}
\end{figure}

Now we show the instability of any equilibria with isotropy $(S_{k_1}\times S_{k_2}\times\dots\times S_{k_{l_B}})^m\rtimes Z_m$ for $m$ odd and greater or equal to $7$.

\begin{proposition}[Instability for $m\geq 7$ and odd]\label{prop:m_odd}
Suppose $f\in\mathcal F_{\frac{\pi}{2}}$ with $f$ concave in $[0,\pi]$ and $f'$ concave in $[-\frac{\pi}{2},\frac{\pi}{2}]$, then for all $m=2k+1$ with $k\geq3$ the equilibria $\phi^*$s with isotropy $(S_{k_1}\times S_{k_2}\times\dots\times S_{k_{l_B}})^m\rtimes Z_m$ are unstable.
\end{proposition}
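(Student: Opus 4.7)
The plan is to invoke the negative cut instability condition \eqref{eq:cut_instability_cond}, in the same spirit as Proposition \ref{prop:m_even}, but with a more carefully chosen cut to compensate for the loss of the $g_m(\delta)\equiv 0$ identity (which was the cancellation key for even $m$). Following the hint in Figure \ref{fig:thm_m_odd}, I take the cut $P = (S,\,V(G)\setminus S)$, where $S\subset V(G)$ is a maximal set of oscillators whose phases all lie in some arc of length strictly less than $4\pi/m$. Since within any single constellation $C_l$ the $m$ blocks are evenly spaced at angular distance $2\pi/m$, no three consecutive blocks can fit inside such an arc, so $S$ contains at most two consecutive blocks from each constellation. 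In particular, the maximality of $S$ forces it to contain exactly two adjacent blocks of at least one constellation.

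The next step is to expand the cut weight $\sum_{ij\in C(P)} f'(\phi^*_j-\phi^*_i)$ by decomposing the edges according to the pair of constellations their endpoints belong to. For an ordered pair $(l,l')$ with $l\neq l'$, the structure of $\phi^*$ forces these contributions to be (up to multiplicity) evaluations of $g_m'(\delta_l-\delta_{l'})$, by the same $2\pi/m$-periodicity argument used in Proposition \ref{prop:m_even}. For a within-constellation pair $(l,l)$ where $S$ captures two adjacent blocks of $C_l$, summing over the $m-2$ remaining blocks and using $f'(\theta_{m-j})=f'(\theta_j)$ (from $f$ odd) collapses the expression to $k_l^2\bigl[2g_m'(0)-2f'(0)-2f'(2\pi/m)\bigr]$.

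The crux is then to show this negative combination dominates. Writing $g_m'(0) = f'(0)+2\sum_{j=1}^{(m-1)/2}f'(2\pi j/m)$, it suffices to show that $f'(2\pi/m)$ together with the strongly negative values $f'(2\pi j/m)$ for $j$ near $m/2$ out-balance the positive values for small $j$. This is precisely where Claims \ref{claim:monotonicity} and \ref{claim:concavity} enter. Monotonicity ($f'$ decreasing on $[0,\pi]$) guarantees that the negative terms at angles beyond $\pi/2$ are large in magnitude, while positive terms are bounded above by $f'(0)$. The condition $m\geq 7$ ensures that at most $\lfloor m/4\rfloor\leq (m-1)/4$ of the indices $j\in\{1,\dots,(m-1)/2\}$ put $2\pi j/m$ in the positive region $(0,\pi/2)$, so strictly more terms fall in the negative region; and the concavity bound $f'(\pi/m)\geq \tfrac12 f'(0)$ from Claim \ref{claim:concavity} supplies the quantitative comparison needed to pair each positive $f'$-value against a negative one and certify a net decrease.

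The hard part will be controlling the across-constellation terms $g_m'(\delta_l-\delta_{l'})$ uniformly over the unknown shifts $\delta_l$ and block sizes $k_l$, combined with the within-constellation negative term, to produce a bound that is negative in every admissible configuration. This forces a somewhat delicate case analysis on which constellations contribute blocks to $S$, and it is here that the joint use of monotonicity and the $f'$-concavity bound, together with the quantitative slack granted by $m\geq 7$, is essential; the failure of this argument for $m\in\{3,5\}$ is why the statement is restricted to $m=2k+1$ with $k\geq 3$.
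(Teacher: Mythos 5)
Your setup matches the paper's: the same maximal-arc cut $P=(S,V(G)\setminus S)$ with $d(\phi^*,S)<\tfrac{4\pi}{m}$, the same plan of decomposing the cut weight by constellation pairs, and the same two auxiliary facts (Claims \ref{claim:monotonicity} and \ref{claim:concavity}). Your within-constellation computation $2g_m'(0)-2f'(0)-2f'(2\pi/m)$ is exactly the paper's expression specialized to zero shift. However, there are two genuine gaps. First, the cross-constellation contributions are not ``evaluations of $g_m'(\delta_l-\delta_{l'})$ up to multiplicity'': since two adjacent blocks of \emph{every} constellation lie inside $S$, the edges from $C_{l_1}\cap S$ to $C_{l_2}\setminus S$ must exclude those two blocks, giving $2g_m'(\delta)-f'(\delta+\tfrac{2\pi}{m})-2f'(\delta)-f'(\delta-\tfrac{2\pi}{m})$ with $\delta=\delta_{l_1l_2}\in[0,\tfrac{2\pi}{m}]$. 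This single formula covers both the within- and cross-constellation cases, so the ``delicate case analysis on which constellations contribute blocks to $S$'' that you defer never actually arises; what is needed instead is a bound on this one expression that is uniform in $\delta$.

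Second, and more seriously, your argument for why the expression is negative does not go through. Counting how many indices $j$ place $\tfrac{2\pi j}{m}$ in the region where $f'<0$ versus $f'>0$ compares numbers of terms, not magnitudes, and monotonicity of $f'$ alone gives no lower bound on the size of the negative values relative to the positive ones. The working argument requires the reflection symmetry $f'(\tfrac{\pi}{2}+x)=-f'(\tfrac{\pi}{2}-x)$ (i.e.\ $f$ even about $\tfrac{\pi}{2}$, which you never invoke), used to rewrite $g_m'(\delta)$ as a sum of half-step-shifted pairs $f'(\delta+\tfrac{2\pi}{m}j)-f'(\delta-\mathrm{sgn}(j)\tfrac{\pi}{m}+\tfrac{2\pi}{m}j)$, each of which Claim \ref{claim:monotonicity} makes negative except the $j=-1$ term; retaining only $j=\pm1$ yields an upper bound which, after substitution, reduces to $f'(0)-2f'(\tfrac{\pi}{m})\leq 0$ by Claim \ref{claim:concavity}. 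Your proposed use of Claim \ref{claim:concavity} to ``pair each positive $f'$-value against a negative one'' is not a substitute for this chain, and without the symmetrization you have no uniform control over the unknown shifts $\delta_{l_1l_2}$, which is precisely the hard part you acknowledge leaving open.
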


The proof of Proposition \ref{prop:m_odd} also uses our cut condition to show instability, but with a different cut induced by the partition $P=(S,V(G)\backslash S)$ of $V(G)$ where $S$ is set to the a maximal subset of $V(G)$ such that $d(\phi,S)<\frac{4\pi}{m}$, see Figure \ref{fig:thm_m_odd} for an illustration of $P$. Notice that any of these partitions will include all the oscillators of two consecutive blocks of every constellation.
The details of the proofs are rather technical and are relegated to Appendix \ref{ap:proof_m_odd}.

\section{Effect of Delay}
\label{sec:delay}
 Once delay is introduced to the system of coupled oscillators, the problem becomes fundamentally harder. For example, for pulse-coupled oscillators, the reception of a pulse no longer gives accurate information about the relative phase difference $\Delta\phi_{ij}=\phi_j-\phi_i$ between the two interacting oscillators. Before, at the exact moment when $i$ received a pulse from $j$, $\phi_j$ was zero and the phase difference was estimated locally by $i$ as $\Delta\phi_{ij}=-\phi_i$. However, now when $i$ receives the pulse, the difference becomes $\Delta\phi_{ij}=-\phi_i-\psi_{ij}$. Therefore, the delay propagation acts as an error introduced to the phase difference measurement and unless some information is known about this error, it is not possible to predict the behavior. Moreover, as we will see later, slight changes in the distribution can produce nonintuitive behaviors.

 Even though it may not be satisfactory for some applications, many existing works choose to ignore delay. (see for e.g., \cite{Mirollo&Strogatz90SynchronizationofPulseCoupled, Lucarelli&Wang2004decentralizedsynchronization, Monzon&Paganini2005gobalconsidarations}). That is mainly for analytical tractability. On the other hand, when delay is included \cite{Papachristodoulou&Jadbabie2006SynchronizationinOscillator} the studies concentrate on finding  bounds on delay that maintain stability.

In this section we study how delay can {\it change} the stability in a network of weakly coupled oscillators.
A new framework to study these systems with delay will be set up by constructing an equivalent non-delayed system that has the same behavior as the original one in the continuum limit. We then further use this result to show that large heterogeneous delay can help reach synchronization, which is a bit counterintuitive and significantly generalizes previous related studies \cite{Izhikevich99WeaklyPulseCoupled, Vreeswijk&Abbott&Ermentrout94wheninhibition, Gerstner1996RapidPhaseLocking}. We will assume complete graph to simplify notation and exposition although the results can be extended for a boarder class of densely connected networks.

The contribution of this section is two fold. First, it  improves the understanding of the effect of delays in networks of coupled oscillators. And second, it opens new possibilities of using delay based mechanisms  to increase the region of attraction of the in-phase equilibrium set.
We shall build on existing arguments such as mean field approximation \cite{Kuramoto84chemicaloscillations} and Lyapunov stability theory \cite{Monzon&Paganini2005gobalconsidarations,Jadbabaie&Motee&Barahona2004stabilityofkuramoto} while looking at the problem from a different perspective.


\subsection{Mean Field Approximation}
Consider the case where the coupling between oscillators is all to all and identical ($\mathcal N_i=\mathcal N\backslash \{i\}$, $\forall i\in\mathcal N$ and $f_{ij}=f$ $\forall i,j$). And assume the phase lags $\psi_{ij}$ are randomly and independently chosen from the same distribution with probability density $g(\psi)$. By letting $N\rightarrow +\infty$ and $\varepsilon \rightarrow 0$ while keeping $\varepsilon N=:\bar{\varepsilon}$ a constant, \eqref{eq:phase_dynamics} becomes
\begin{equation}\label{eq:velocity_field}
v(\phi,t):=\omega + \bar{\varepsilon} \int_{-\pi}^{\pi}\int_0^{+\infty} f(\sigma - \phi - \psi)g(\psi)\rho(\sigma,t) d\psi d\sigma,
\end{equation}
where $\rho(\phi,t)$ is a time-variant normalized phase distribution that keeps track of the fraction of oscillators with phase $\phi$ at time $t$, and $v(\phi,t)$ is the velocity field that expresses the net force that the whole population applies to a given oscillator with phase $\phi$ at time $t$. Since the number of oscillators is preserved at any time, the evolution of $\rho(\phi,t)$ is governed by the continuity equation
\begin{equation}\label{eq:continuity}
\frac{\partial\rho}{\partial t} + \frac{\partial}{\partial\phi}(\rho v) = 0
\end{equation}
 with the boundary conditions $\rho(0,t)\equiv\rho(2\pi,t)$.
Equations \eqref{eq:velocity_field}-\eqref{eq:continuity} are not analytically solvable in general. Here we propose a new perspective that is inspired by the following observation. 

Consider the non-delayed system of the form 
\begin{equation}\label{eq:non_lagged_dynamics}
\dot{\phi}_i=\omega + \varepsilon\sum_{j\in \mathcal N_i}H(\phi_j - \phi_i),
\end{equation}
where
\begin{equation}\label{eq:convolution}
H(\theta)=f\ast g(\theta)=\int_0^{+\infty}f(\theta-\psi)g(\psi) d\psi
\end{equation}
is the convolution between $f$ and $g$.

By the same reasoning of \eqref{eq:velocity_field} it is easy to see
that the limiting velocity field of \eqref{eq:non_lagged_dynamics}
is
\begin{align*}
&v_H(\phi,t)  =\omega + \bar{\varepsilon}\int_{0}^{2\pi}H(\sigma -\phi)\rho(\sigma,t)d\sigma \\
&                =\omega + \bar{\varepsilon}\int_{0}^{2\pi}\left(\int_{0}^{+\infty}f((\sigma-\phi)-\psi)g(\psi) d\psi \right) \rho(\sigma,t)d\sigma \\
&                =\omega + \bar{\varepsilon} \int_{0}^{2\pi}\int_{0}^{+\infty}f(\sigma-\phi-\psi)g(\psi)\rho(\sigma,t) d\psi d\sigma     \\&     =v(\phi,t)
\end{align*}
where in the first and the third steps we used \eqref{eq:convolution} and \eqref{eq:velocity_field} respectively.
Therefore, \eqref{eq:phase_dynamics} and \eqref{eq:non_lagged_dynamics} {\bf have the same continuum limit.}

%

\begin{remark}
Although \eqref{eq:non_lagged_dynamics} is quite different from \eqref{eq:phase_dynamics}, both systems behave exactly the same in the continuum limit.  Therefore, as $N$ grows, \eqref{eq:non_lagged_dynamics} starts to become a good approximation of \eqref{eq:phase_dynamics} and therefore can be analyzed to understand the behavior of \eqref{eq:phase_dynamics}.
\end{remark}

\begin{figure}[htp]
\centering
\includegraphics[width=\columnwidth]{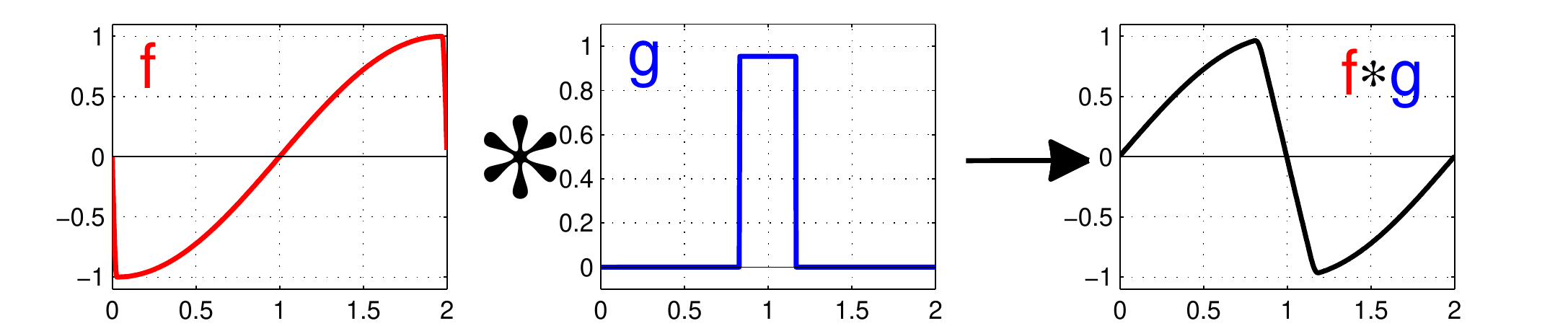}
\caption{{Effect of delay in coupling shape}}\label{fig:conv}
\end{figure}

Figure \ref{fig:conv} shows how, the underlying delay (in this case the delay distribution) determines what type of coupling (attractive or repulsive) produces synchronization. The original function $f$ produces repulsive coupling, whereas the corresponding $H$ is attractive. In fact, as we will soon see, the distribution of delay not only can qualitatively affect the type of coupling but also can change the stability of certain phase-locked limit cycles.

 We now study two example to illustrate how this new approximation can provide significant information about performance and stability of the original system. We also provide numerical simulations to verify our predictions.

\subsection{Kuramoto Oscillators}

We start by studying an example in the literature \cite{Watanabe&Strogatz1994constantsofmotions} to demonstrate how we can use the previous equivalent non-delayed formulation to provide a better understanding of systems of coupled oscillators with delay. When  $f(\theta)=K\sin(\theta)$,  $H(\theta)$ can be easily calculated:
\begin{align*}
&H(\theta)=\int_{0}^{+\infty}K\sin(\theta - \psi)g(\psi)d\psi \\&=K\int_{0}^{+\infty}\Im[e^{i(\theta - \psi)}g(\psi)]d\psi
=K\Im[e^{i\theta}\int_{0}^{+\infty}e^{-i\psi}g(\psi)d\psi] \\&=K\Im\left[e^{i\theta}Ce^{-i\xi}\right] = KC\sin(\theta - \xi)
\end{align*}
where $\Im$ is the imaginary part of a complex number, i.e. $\Im[a+ib]=b$. The values of $C>0$ and $\xi$ are calculated using the identity
\[
    Ce^{i\xi}=\int_{0}^{+\infty}e^{i\psi}g(\psi)d\psi.
\]
This complex number, usually called ``order parameter'', provides a measure of how the phase-lags are distributed within the unit circle. It can also be interpreted as the center of mass of the lags $\psi_{ij}$'s when they are thought of as points ($e^{i\psi_{ij}}$) within the unit circle $\mathds S^1$. Thus, when $C\approx 1$, the $\psi_{ij}$'s are mostly concentrated around $\xi$. When $C\approx 0$, the delay is distributed such that $\sum_{ij} e^{i\psi_{ij}}\approx 0$.

In this example, \eqref{eq:non_lagged_dynamics} becomes
\begin{equation}\label{eq:sine_approximation}
\dot{\phi}_i= \omega + \varepsilon KC\sum_{j\in\mathcal{ N}_i}\sin(\phi_j- \phi_i -\xi).
\end{equation}

Here we see how the distribution of $g(\psi)$ has a direct effect on the dynamics. For example, when the delays are heterogeneous enough such that $C\approx 0$, the coupling term disappears and therefore makes synchronization impossible. A complete study of the system under the context of superconducting Josephson arrays was performed \cite{Watanabe&Strogatz1994constantsofmotions} for the complete graph topology. There the authors characterized the condition for in-phase synchronization in terms of $K$ and $Ce^{i\xi}$. More precisely, when $KCe^{i\xi}$ is on the right half of the plane  ($KC\cos(\xi)>0$), the system almost always synchronizes. However, when $KCe^{i\xi}$ is on the left half of the plane ($KC\cos(\xi)<0$), the system moves towards an incoherent state where all of the oscillators' phases spread around the unit circle such that its order parameter, i.e. $
    \frac{1}{N}\sum_{l=1}^{N} e^{i\phi_l},
$
becomes zero.


%
%

\begin{figure}[htp]
\centering
\includegraphics[width=.35\columnwidth]{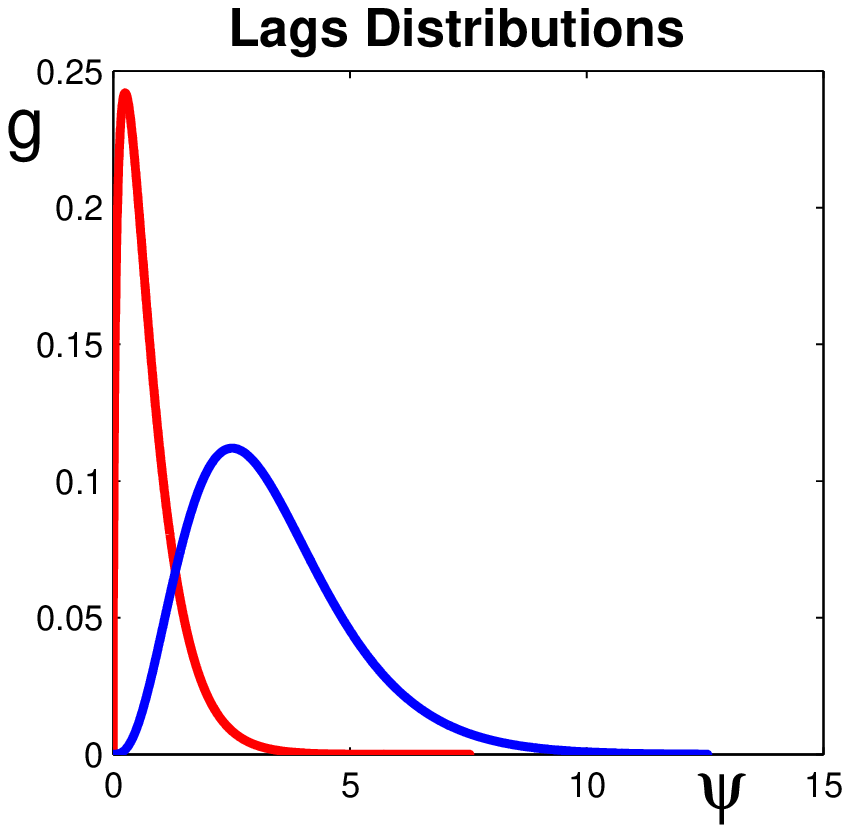}
\includegraphics[width=.35\columnwidth]{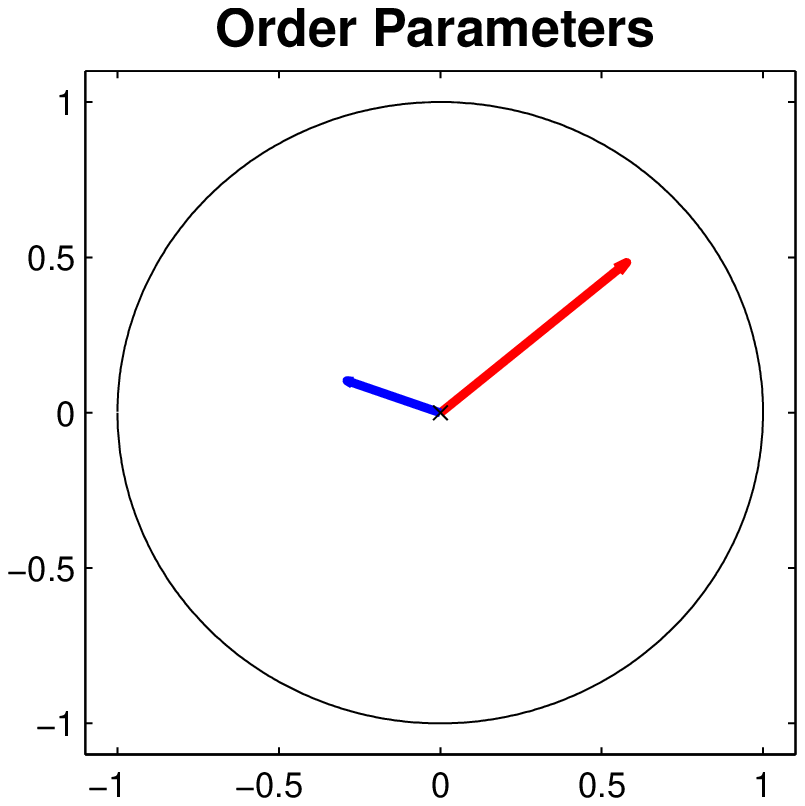}
\caption{{ Delay distributions and their order parameter $Ce^{i\xi}$} }
\label{fig:delay_distributions}
\end{figure}
\begin{figure}[htp]
\centering
\includegraphics[width=.85\columnwidth]{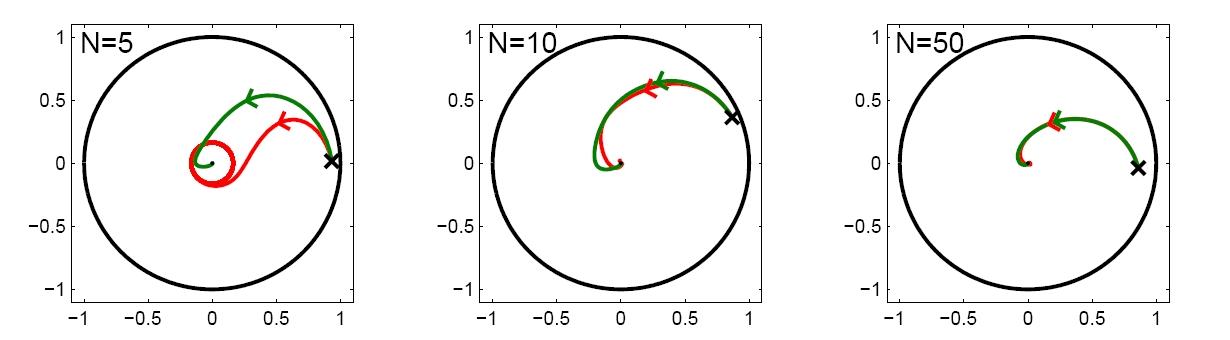}
\includegraphics[width=.85\columnwidth]{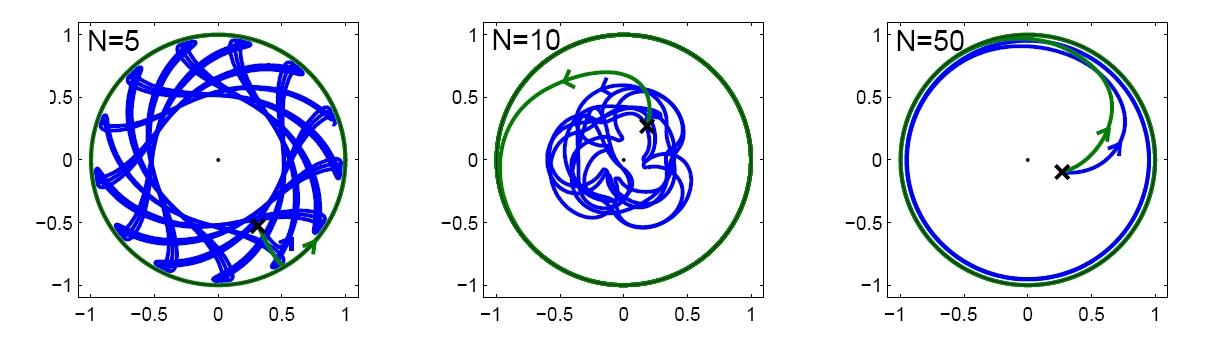}
\caption{{ Repulsive sine coupling with heterogeneous delays }}
\label{fig:sine_example}
\end{figure}


We now provide simulation results to illustrate how \eqref{eq:sine_approximation} becomes a good approximation of the original system when $N$ is large enough. We simulate the original repulsive ($K<0$) sine-coupled system with heterogeneous delays and its corresponding approximation \eqref{eq:sine_approximation}.  Two different delay distributions, depicted in Figure \ref{fig:delay_distributions}, were selected such that their corresponding order parameter lie in different half-planes.

The same simulation is repeated for $N=5,10,50$. Figure \ref{fig:sine_example} shows that when $N$ is small, the phases' order parameter of the original system (in red/blue) draw a trajectory which is completely different with respect to its approximation (in green). However, as $N$ grows, in both cases the trajectories become closer and closer. Since $K<0$, the trajectory of the system with wider distribution ($C\cos \xi<0$) drives the order parameter towards the boundary of the circle, i.e., \textbf{heterogeneous delay leads to homogeneous phase}.


\subsection{Effect of Heterogeneity}

We now explain a more subtle effect that heterogeneity can produce. Consider the system in \eqref{eq:non_lagged_dynamics} where $H$ odd and continuously differentiable. Then, from Section \ref{sec:topology&coupling}, all of the oscillators eventually end up running at the same speed $\omega$ with fixed phase difference such that the sum $\sum_{i\in \mathcal N_i} H(\phi_j-\phi_i)$ cancels $\forall i$.
Moreover, we can apply  \eqref{eq:cut_instability_cond} to assess the stability of these orbits. Therefore, if we can find a cut $C$ of the network such that
$
\sum_{ij\in C} H'(\phi^*_j-\phi^*_i) < 0,
$
the phase-locked solution will be unstable.

Although this condition is for non-delayed phase-coupled oscillators, the result of this section allows us to translate it for systems with delay. Since $H$ is the convolution of the coupling function $f$ and the delay distribution function $g$, we can obtain $H'(\phi^*_j-\phi^*_i)<0$, even when $f'(\phi^*_j-\phi^*_i)>0$. This usually occurs when the convolution widens the region with a negative slope of $H$. See Figure \ref{fig:conv} for an illustration of this phenomenon.

\begin{figure}[htp]
\centering
\includegraphics[width=.8\columnwidth]{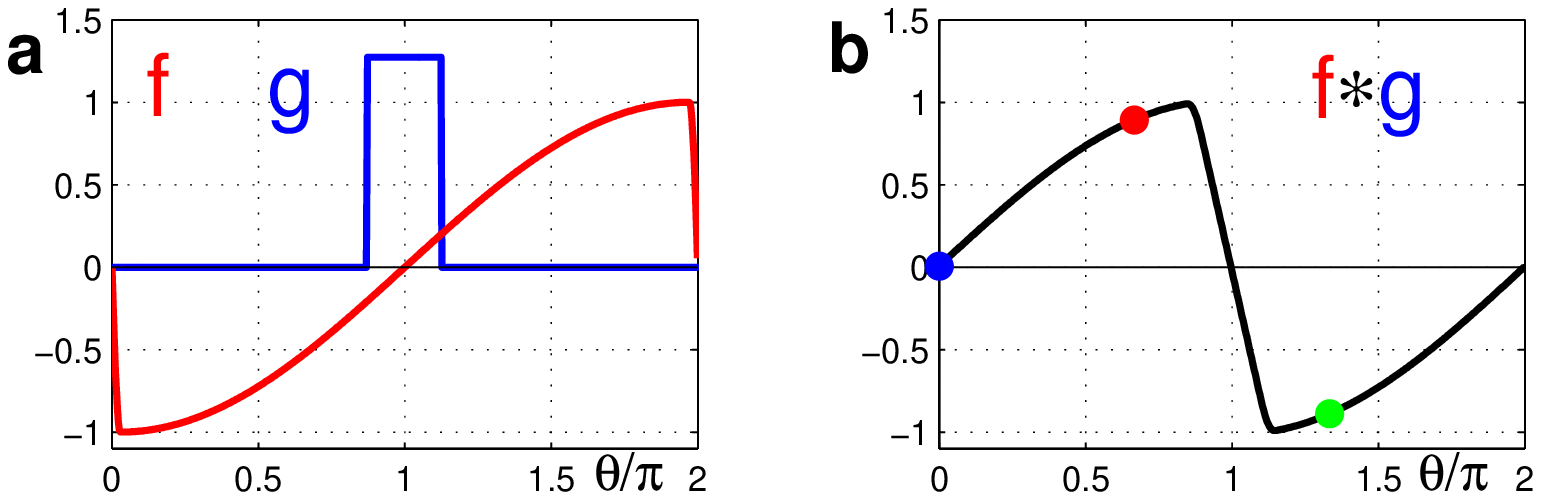}
\includegraphics[height=.45\columnwidth,width=.8\columnwidth]{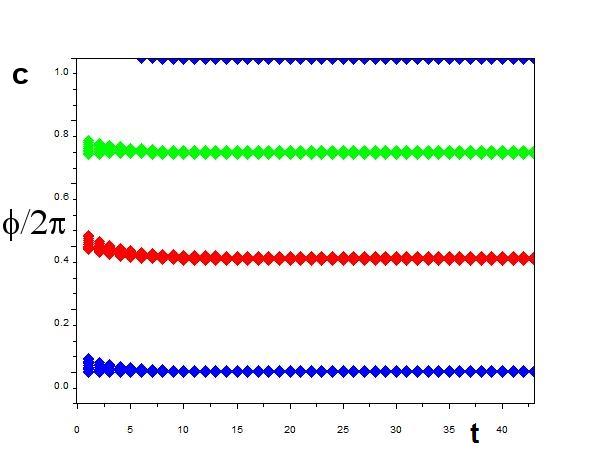}
\caption{{ Pulse-coupled oscillators with delay: Stable equilibrium } }
\label{fig:stable}
\end{figure}

\begin{figure}[htp]
\centering
\includegraphics[width=.8\columnwidth]{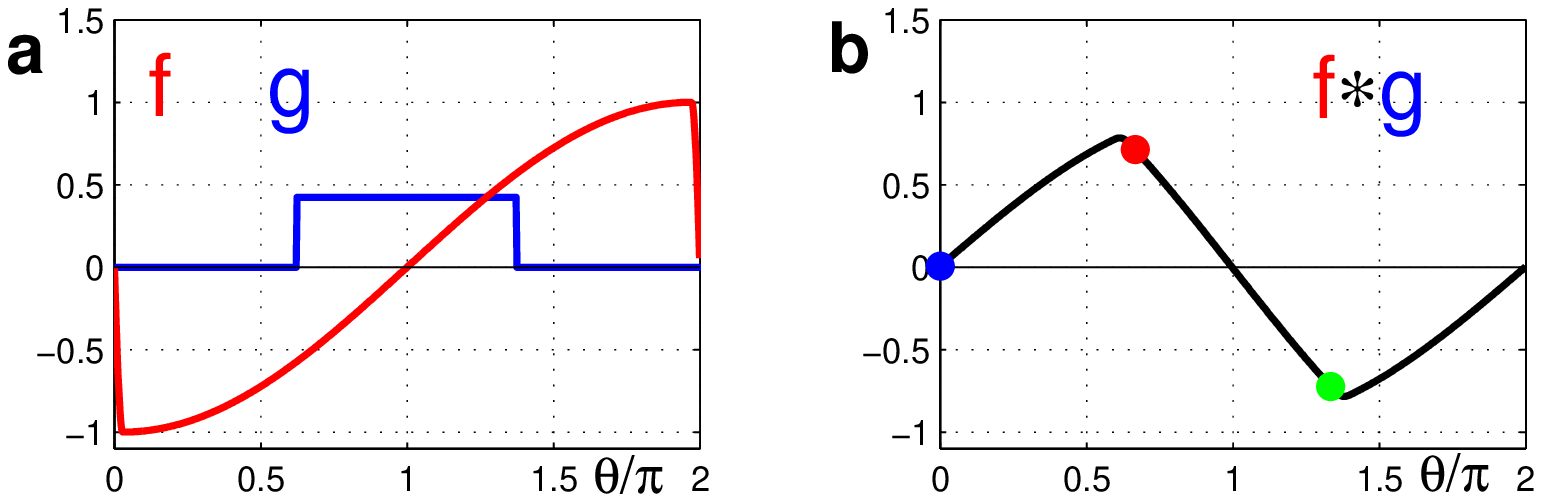}
\includegraphics[height=.45\columnwidth,width=.8\columnwidth]{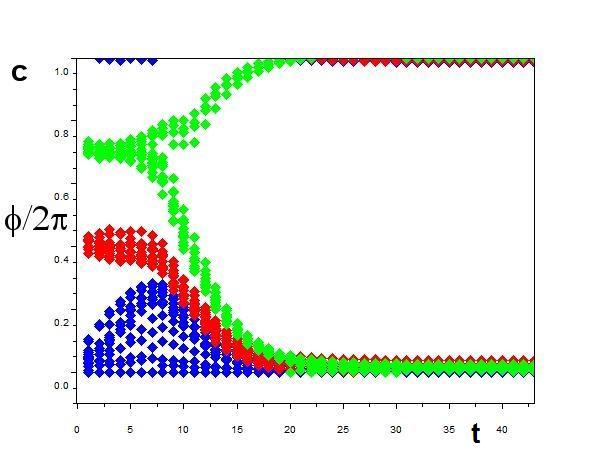}
\caption{{ Pulse-coupled oscillators with delay: Unstable equilibrium }}
\label{fig:unstable}
\end{figure}

Figures \ref{fig:stable} and \ref{fig:unstable} show two simulation setups of 45 oscillators {\bf pulse-coupled} all to all. The initial state is close to a phase locked configuration formed of three equidistant clusters of 15 oscillators each. The shape of the coupling function $f$ and the phase lags distributions are shown in part a. We used \eqref{eq:f_kappa} to implement the corresponding pulse-coupled system~\eqref{pc_model}. While $f$ is maintained unchanged between both simulations, the distribution $g$ does change.
 Thus, the corresponding $H=f\ast g$ changes as it can be seen in part b; the blue, red, and green dots correspond to the speed change induced in an oscillator within the blue cluster by oscillators of each cluster. Since all clusters have the same number of oscillators, the net effect is zero. In part c the time evolution of oscillators' phases relative to the phase of a blue cluster oscillator are shown. Although the initial conditions are exactly the same, the wider delay distribution on Figure \ref{fig:unstable} produces negative slope on the red and green points of part b, which destabilizes the clusters and drives oscillators toward in-phase synchrony.

Finally, we simulate the same scenario as in Figures \ref{fig:stable} and \ref{fig:unstable} but now changing $N$ and the standard deviation, i.e. the delay distribution width. Figure \ref{fig:sinc_prob} shows the computation of the  synchronization probability vs. standard deviation. The dashed line denotes the minimum value that destabilizes the equivalent system. As $N$ grows, the distribution shape becomes closer to a step, which is the expected shape in the limit. It is quite surprising that as soon as the equilibrium is within the region of $H$ with negative slope, the equilibrium becomes unstable as the theory predicts.

\begin{figure}[htp]
\centering
\includegraphics[height=.8\columnwidth]{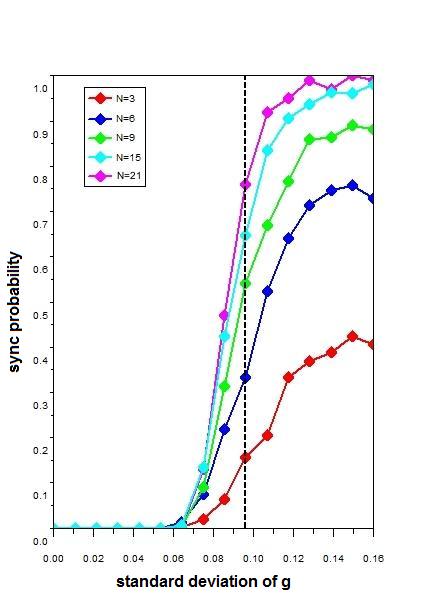}
\caption{{Pulse-coupled oscillators with delay: Synchronization probability }}\label{fig:sinc_prob}
\end{figure}


\section{Conclusion}
\label{sec:conclusion}

This paper analyzes the dynamics of identical weakly coupled oscillators while relaxing several classical assumptions on coupling, delay and topology. Our results provide global synchronization guarantee for a wide range of scenarios. There are many directions that can be taken to further this study. For example, for different topologies, to guarantee global in-phase synchronization, how does the requirement on coupling functions change? Another specific question is to complete the proof in Section \ref{sec:coupling} for the cases when $m=1,3,5$. Finally, it will be of great interest if we can apply results and techniques in this paper to a wide range of applications such as transient stability analysis of power networks and clock synchronization of computer networks.
\vspace{0.09in}

\noindent {\bf Acknowledgments:} The authors thank Steven H. Strogatz of Cornell for useful discussions.

\appendix

\section {Proof of Proposition \ref{prop:m_odd} }\label{ap:proof_m_odd}

As in Proposition \ref{prop:m_even} we will use our cut condition to show the instability of $\phi^*$. Thus, we define a partition $P=(S,V(G)\backslash S)$ of $V(G)$ by taking $S$ to be a maximal subset of $V(G)$ such that $d(\phi,S)<\frac{4\pi}{m}$, see Figure \ref{fig:thm_m_odd} for an illustration of $P$. Notice that any of these partitions will include all the oscillators of two consecutive blocks of every constellation.

Instead of evaluating the total sum of the weights in the cut we will show that the sum of edge weights of the links connecting the nodes of one constellation in $S$ with the nodes of a possibly different constellation in $V(G)\backslash S$ is negative. In other words, we will focus on showing
\begin{equation}\label{eq:l1l2}
\sum_{ij\in \mathcal K_{l_1l_2}} f'(\phi_j^*-\phi_i^*)<0
\end{equation}
where $\mathcal K_{l_1l_2}=\{ij:i\in C_{l_1}\cap S, j\in C_{l_2} \cap V(G)\backslash S\}$.

Given any subset of integers $J$, we define
\[
g_m^{J}(\delta)=g_m(\delta) - \sum_{j\in J} f(\frac{2\pi}{m}j+\delta).
\]

Then, we can rewrite \eqref{eq:l1l2} as
\begin{align}
\sum_{ij\in \mathcal K_{l_1l_2}} f'(\phi_j^*&-\phi_i^*)= \nonumber\\=& (g_m^{\{0,1\}})'(\delta_{l_1l_2}) + (g_m^{\{-1,0\}})'(\delta_{l_1l_2})
\nonumber \\
=&2g_m'(\delta_{l_1l_2}) -f'(\delta_{l_1l_2} + \frac{2\pi}{m} )  - 2f'( \delta_{l_1l_2} ) \nonumber\\&- f'( \delta_{l_1l_2} - \frac{2\pi}{m} )\label{eq:l1l2b}
\end{align}
where  $\delta_{l_1l_2}\in[0,\frac{2\pi}{m}]$ is the phase shift between the two constellations.
Then, if we can show that for all $\delta\in [0,\frac{2\pi}{m}]$ \eqref{eq:l1l2b} is less than zero then for any values of $l_1$ and $l_2$ we will have \eqref{eq:l1l2} satisfied.

Since $f$ is odd and even around $\frac{\pi}{2}$, $f'$ is even and odd around $\frac{\pi}{2}$ and  $g_m'(\delta)$ can be rewritten as
\begin{align*}
g_m'(\delta) &= f'(\delta)\\
 +& \sum_{1\leq \norm{j}\leq\lfloor \frac{k}{2}\rfloor} \left\{ f'(\delta + \frac{2\pi}{m}j)-f'(\delta -sgn(j)\frac{\pi}{m} + \frac{2\pi}{m}j) \right\}\\
-&\left[f'(\delta +\frac{\pi}{m}k)+f'(\delta -\frac{\pi}{m}k)\right]\ones_{[k\text{ odd}]}
\end{align*}
where $\ones_{[k\text{ odd}]}$ is the indicator function of the event $[k\text{ odd}]$, the sum is over all the integers $j$ with $1\leq\norm{j}\leq\lfloor\frac{k}{2}\rfloor$ and $k=\frac{m-1}{2}$


The last term only appears when $k$ is odd and in fact it is easy to show that it is always negative as the following calculation shows:
\begin{align*}
&-f'(\delta +\frac{\pi}{m}k)-f'(\delta -\frac{\pi}{m}k)=\\&=- f'(\frac{\pi}{m}k+ \delta)-f'(\frac{\pi}{m}k -\delta )\\&=-f'( \frac{\pi}{2} -\frac{\pi}{2m}+ \delta)-f'(\frac{\pi}{2} -\frac{\pi}{2m} - \delta )\\
&=f'( \frac{\pi}{2} -\delta +\frac{\pi}{2m})-f'(\frac{\pi}{2} -\delta -\frac{\pi}{2m})\\&= f'( \theta  )- f'(\theta - \phi) <0
\end{align*}
where in step one we used the fact of $f'$ being even, in step two we used $k=\frac{m-1}{2}$ and in step three we use $f'$ being odd around $\frac{\pi}{2}$. The last step comes from substituting $\theta=\frac{\pi}{2} -\delta +\frac{\pi}{2m}$, $\phi=\frac{\pi}{m}$ and apply Claim \ref{claim:monotonicity}, since for $m\geq7$  we have $0\leq\theta-\phi<\theta\leq\pi$.

Then it remains the show that the terms of the form $f'(\delta + \frac{2\pi}{m}j)-f'(\delta -sgn(j)\frac{\pi}{m} + \frac{2\pi}{m}j)$ are negative for all $j$ s.t. $1\leq \norm{j}\leq \lfloor \frac{k}{2}\rfloor$. This is indeed true when $j$ is positive since for all $\delta \in [0,\frac{2\pi}{m}]$ we get
\[
0\leq \delta  -\frac{\pi}{m} + \frac{2\pi}{m}j < \delta + \frac{2\pi}{m}j \leq \pi, \text{ for } 1\leq j \leq \lfloor \frac{k}{2}\rfloor
\]
and thus we can apply again Claim \ref{claim:monotonicity}.

When  $j$ is negative there is one exception in which Claim \ref{claim:monotonicity} cannot be used since
\[
-\pi\leq \delta + \frac{2\pi}{m}j   < \delta + \frac{2\pi}{m}j +\frac{\pi}{m}\leq 0, \forall \delta\in[0,\frac{2\pi}{m}]
\]
only holds for $-\lfloor \frac{k}{2} \rfloor \leq j \leq -2$. Thus the term corresponding to $j=-1$ cannot be directly eliminated.

Then, by keeping only the terms of the sum with $j=\pm1$, $g'_m$ is strictly upper bounded for all $\delta\in[0,\frac{2\pi}{m}]$ by
\begin{align}
g_m'(\delta) < 
&f'(\delta) + f'(\delta -\frac{2\pi}{m}) -f'(\delta -\frac{\pi}{m})  \nonumber\\&+ f'(\delta +\frac{2\pi}{m}) -f'(\delta +\frac{\pi}{m})\label{eq:boundongmp}
\end{align}

Now substituting \eqref{eq:boundongmp} in \eqref{eq:l1l2b} we get
\begin{align*}
&\sum_{ij\in \mathcal K_{l_1l_2}} f'(\phi_j^*-\phi_i^*)
\\ 
&<  f'(\delta -\frac{2\pi}{m}) - 2f'(\delta -\frac{\pi}{m})+ f'(\delta +\frac{2\pi}{m})-2f'(\delta +\frac{\pi}{m})
\end{align*}
\begin{equation*}
\begin{aligned}
&\leq f'(\delta -\frac{2\pi}{m}) - 2f'(\delta -\frac{\pi}{m})  -f'(\delta +\frac{\pi}{m})
\\ 
&\leq f'(\delta -\frac{2\pi}{m}) - 2f'(\delta -\frac{\pi}{m}) 
\end{aligned}
\end{equation*}
where in the last step we used the fact that for $m\geq6$ and $\delta\in[0,\frac{2\pi}{m}]$, $f'(\delta +\frac{\pi}{m})\geq 0$.

Finally, since for $\delta\in[0,\frac{2\pi}{m}]$ $f'(\delta -\frac{2\pi}{m})$ is strictly increasing and $f'(\delta -\frac{\pi}{m})$ achieves its minimum for $\delta\in\{0,\frac{2\pi}{m}\}$, then
\[
 f'(\delta -\frac{2\pi}{m}) - 2f'(\delta -\frac{\pi}{m})\leq f'(0) - 2f'(\frac{\pi}{m})\leq 0
\]
where the last inequality follow from Claim \ref{claim:concavity}.

Therefore, for all $m$ odd greater or equal to $7$ we obtain
\[
\sum_{ij\in \mathcal K_{l_1l_2}} f'(\phi_j^*-\phi_i^*)< f'(0) - 2f'(\frac{\pi}{m})\leq 0
\]
and since this result is independent on the indices $l_1$, $l_2$, then
\begin{align*}
&\sum_{ij\in C(S,V(G)\backslash S)} f'(\phi_j^*-\phi_i^*)  \\&\quad\quad\quad\quad= \sum_{l_1=1}^{l_B}\sum_{l_2=1}^{l_B}\sum_{ij\in \mathcal K_{l_1l_2}} f'(\phi_j^*-\phi_i^*)<0
\end{align*}
and thus $\phi^*$ is unstable.

\bibliographystyle{ieeetr}
\bibliography{mallada}

\end{document}